\newcommand{\nat}{\ensuremath {\mathbb N} }
\newcommand{\Prob}{\mathbb{P}}
\newcommand{\eps}{\varepsilon}
\newcommand{\E}{\mathbb E}
\newcommand{\bin}{\mathrm{Bin}}
\newcommand{\G}{\mathbb{G}}
\newcommand{\Gp}{\mathcal{G}}
\newcommand{\Pk}{\mathbb{P}_k}
\newcommand{\Gk}{\mathbb{\hat{\G}}_k}
\newcommand{\ccc}{\mathcal C}
\newcommand{\aaa}{\mathcal A}
\newtheorem{theorem}{Theorem}[section]
\newtheorem{lemma}[theorem]{Lemma}
\newtheorem{algo}[theorem]{Algorithm}
\newtheorem{example}[theorem]{Example}
\newtheorem{remark}[theorem]{Remark}
\newtheorem{observation}[theorem]{Observation}
\author{
Adam Logan\thanks{The Tutte Institute for Mathematics and Computing, Ottawa, ON, Canada.}
\and
Mike Molloy\thanks{Department of Computer Science, University of Toronto, Toronto, ON, Canada.}
\and
Pawe\l{}~Pra\l{}at\thanks{Department of Mathematics, Ryerson University, Toronto, ON, Canada.}
}
\title{A variant of the Erd\H{o}s-R\'enyi random graph process} 
\begin{document}

\maketitle

\begin{abstract}
We consider a natural variant of the Erd\H os-R\'enyi random graph process in which $k$ vertices are special and are never put into the same connected component. The model is natural and interesting on its own, but is actually inspired by the combinatorial data fusion problem that itself is connected to a number of important problems in graph theory. We will show that a phase transition occurs when the number of special vertices is roughly $n^{1/3}$, where $n$ is the number of vertices.
\end{abstract}

\section{Introduction}

The study of the \emph{random graph process} was initiated by Erd\H{o}s and R\'enyi in their celebrated paper from 1959~\cite{ER1959}. The process starts with an empty graph on $n$ vertices and adds all ${n \choose 2}$ edges in a random order. The model is now well understood, though there are still some challenging questions waiting to be answered (see the following monographs on the topic: \cite{bol, JLR,BookFK}). On the other hand, relatively little is known about variants of this process. In particular, a natural variant of the model is the \emph{constrained random graph process} in which, after the edge to be inserted is chosen at random, we check whether the graph at this stage of the process together with this edge satisfies some properties; if so, we accept it, otherwise we reject it and never consider it again. 

The first result on the constrained random graph process is due to Ruci\'nski and Wormald, who answered a question of Erd\H{o}s regarding the process in which we maintain a bound on the maximum degree~\cite{RW_bounded_degree}. Erd\H{o}s, Suen, and Winkler considered both the odd-cycle-free process~\cite{ESW_odd-cycle-free} and the triangle-free process that was later analyzed by Bohman~\cite{Bohman_Triangles}. Other special cases that were considered include the properties of being cycle-free~\cite{A_cycle-free}, $H$-free~\cite{BR_H-free,OT_H-free}, and planarity~\cite{RSA:RSA20186}.

\medskip

In this paper, we consider another natural constrained random graph process in which $k$ vertices are special and never put into the same connected component. This problem was motivated by considering a natural greedy algorithm for the combinatorial data fusion problem.

\medskip

The paper is structured as follows. In Section~\ref{sec:definitions} we introduce necessary definitions and state main results. Connections to the combinatorial data fusion problem are discussed in Section~\ref{sec:motivation}. The random graph process is formally introduced in Section~\ref{sec:more_definitions} where we also make a connection between the two models and list all properties of the original one that we need to understand our model. In Section~\ref{sec:concentration-tools}, we develop some concentration tools that will be used in the proofs. The process shows two different behaviours: $k \ll n^{1/3}$ is considered in Section~\ref{sec:k-small}; Section~\ref{sec:k-large} is devoted to $k \gg n^{1/3} (\log n)^{4/3} (\log \log n)^{1/3}$. Final conclusions are in Section~\ref{sec:remarks}.

\section{Definitions and main results}\label{sec:definitions}

In this section, we introduce the $k$-process we are interested in, the asymptotic notation used throughout the paper, and state the main results. 

\subsection{$k$-process}\label{subsec:k-process}

Let $1 \le k \le n$ be any two integers ($k=k(n)$ may be and usually is a function of $n$). The \emph{$k$-process} starts with $\Pk(n,0)$, the empty graph on $n$ vertices, where $k$ of the vertices are \emph{special}.  For integer $m \ge 1$, create $\Pk(n,m)$ from $\Pk(n,m-1)$ as follows. Choose a random pair of vertices not yet considered (in particular, they are not connected by an edge); connect these two vertices unless doing so would put two of the special vertices in the same component (in which case we say that a \emph{collision} occurs).  Keep repeating these steps, if needed, until one edge is added. In particular, $\Pk(n,m)$ has $m$ edges. The process stops at time $M=M(n,k)$ when $\Pk(n,M)$ has precisely $k$ connected components, each of which is a complete graph. (Of course, $M$ is a random variable counting the number of edges at the end of the process.) Alternatively, one can stop the process much earlier, at time $\hat{M}=\hat{M}(n,k)$ when $\Pk(n,\hat{M})$ has $k$ connected components for the first time, as $M$ and $\Pk(n,M)$ are already determined at this point.

\medskip

The main question raised in this paper is the following one. What can be said about the distribution of sizes of the components of $\Pk(n,M)$? Another natural question is: what can be said about $M(n,k)$ as a function of $k$? What about $\hat{M}(n,k)$?

\subsection{Asymptotics}

As typical in random graph theory, we shall consider only asymptotic properties of $\Pk(n,m)$ (and $\G(n,m)$, $\Gk(n,m)$ defined below) as $n\rightarrow \infty$, where $m=m(n)$ depends on $n$. We emphasize that the notations $o(\cdot)$ and $O(\cdot)$ refer to functions of $n$, not necessarily positive, whose growth is bounded. We use the notations $f \ll g$ for $f=o(g)$ and $f \gg g$ for $g=o(f)$. We also write $f(n) \sim g(n)$ if $f(n)/g(n) \to 1$ as $n \to \infty$ (that is, when $f(n) = (1+o(1)) g(n)$). We say that an event in a probability space holds \emph{asymptotically almost surely} (\emph{a.a.s.}) if its probability tends to one as $n$ goes to infinity.

\subsection{Results}

In this subsection, we summarize the main results proved in this paper. It turns out that the $k$-process changes its behaviour around $k=n^{1/3}$. If $k \ll n^{1/3}$, the giant component is formed before collisions start affecting the process. In particular, when the first special vertex joins the giant its size is much larger than the total size of all other special components. As a result, the giant will continue growing and at the end of the process it will have size $n(1-o(1))$. On the other hand, if $k \gg n^{1/3}$, collisions will start affecting the process much earlier, namely, when each component has size smaller than the total size of all special components. As a result, no component is able to dominate all the others and the largest component in the end has size $o(n)$. For technical reasons (see the final section for a longer discussion), our proofs require slightly larger values of $k$, namely, $k \gg n^{1/3} (\log n)^{4/3} (\log \log n)^{1/3}$.  Below we state these results precisely.

\medskip

For a given graph $G$ (deterministic or random) with $r=r(G)$ connected components, let $L_i(G)$ be the size of an $i$-th largest component ($i=1,2, \ldots, r$). Then the following holds.

\begin{theorem}\label{thm:main}
Let $\omega=\omega(n)$ be any function tending to infinity as $n \to \infty$ (sufficiently slowly). Then a.a.s.\ the following holds:
\begin{itemize}
\item [(a)] If $k \ll n^{1/3}$, then 
$$
L_1(\Pk(n,M)) = n - O(k^3 \omega \log (n^{1/3}/k)) \sim n.
$$
As a result, 
$$ 
M(n,k) \sim { n - k + 1 \choose 2} \sim \frac {n^2}{2}.
$$
\item [(b)] If $k \gg n^{1/3} (\log n)^{4/3} (\log \log n)^{1/3}$ and $k \ll n / \log n$, then 
$$
L_1(\Pk(n,M)) = O \left( \left( \frac {n \log(k/n^{1/3}) \log^4 n}{k^3} \right)^{1/2} n \right) = o(n). 
$$
As a result, 
$$ 
M(n,k) = O \left( \left( \frac {n \log(k/n^{1/3}) \log^4 n}{k^3} \right)^{1/2} n^2 \right) = o(n^2).
$$
\item [(c)] If $n / (\omega \log n) \le k \le n$, then $L_1(\Pk(n,M)) = O (\omega^{3/2} \log^4 n) = o(n)$. As a result, $M(n,k) = O (n \omega^{3/2} \log^4 n) = o(n^2)$.
\end{itemize}
\end{theorem}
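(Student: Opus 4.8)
The plan rests on a coupling with the ordinary Erd\H{o}s--R\'enyi process. I would generate a uniformly random ordering $e_1,e_2,\ldots$ of all $\binom{n}{2}$ pairs of vertices and run both processes on it: $\G(n,\cdot)$ adds every pair in turn, while the $k$-process adds $e_i$ unless doing so merges two special components. Then, after $i$ pairs have been examined, the $k$-process graph is a subgraph of $\G(n,i)$, and until the first collision the two graphs are equal. Throughout I will freely use the classical facts about $\G(n,m)$ collected in Section~\ref{sec:more_definitions} --- the critical window $m=n/2+\Theta(n^{2/3})$, the size $\sim 2\eps n$ of the giant and the size $O(\eps^{-2}\log(\eps^{3}n))$ of every other component just above criticality (and the dual statement just below), and the $\Theta(1/\eps)$ expected component size of a fixed vertex --- together with the concentration inequalities of Section~\ref{sec:concentration-tools}.

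For part~(a), with $k\ll n^{1/3}$, let $m_1$ be the first time a special vertex joins the largest component. I would first check that a.a.s.\ no collision occurs before $m_1$: a collision would force two special vertices into a common non-giant component of some $\G(n,m)$, $m\le m_1$, an event of probability at most $\binom{k}{2}\cdot O(k/n)=O(k^{3}/n)=o(1)$ by the component statistics of $\G$; hence $\Pk(n,m)=\G(n,m)$ for all $m\le m_1$. At time $m_1$ the giant has size $\sim 2\eps_1 n$ with $\eps_1=\Theta(1/k)$ fixed by $k\cdot(\text{giant size})/n=\Theta(1)$, and crucially $\eps_1^{3}n=\Theta(n/k^{3})\to \infty$ precisely because $k\ll n^{1/3}$, so the barely-supercritical estimates apply: every non-giant component has size $O(\eps_1^{-2}\log(\eps_1^{3}n))=O(k^{2}\log(n^{1/3}/k))$, and the total size $C_{0}$ of the $k-1$ non-giant special components (a sum of $k-1$ sizes each of mean $\Theta(1/\eps_1)=\Theta(k)$) is a.a.s.\ $O(k^{2}\omega)$. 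After $m_1$ the giant can no longer absorb a special component, so each remaining special component grows only by swallowing non-special components, which stay small throughout the supercritical phase (no secondary giant forms, again by the coupling). Writing $C(m)$ for the total size of the non-giant special components and $s(m)$ for the giant's size, each examined pair raises $C$ in expectation by about $C(m)/s(m)$ times the size of a small non-special component; since $s$ grows additively with the mass it absorbs, $s(m)\ge 2\eps_1 n$, and $s$ reaches $n-o(n)$, a differential-inequality (supermartingale) argument should give $C(M)/C_{0}=O(1/\eps_1)=O(k)$ a.a.s., hence $C(M)=O(k^{3}\omega)$, within the stated bound. This gives $L_1(\Pk(n,M))=n-O(k^{3}\omega\log(n^{1/3}/k))\sim n$; and since any $T_1,\ldots,T_k\ge 1$ with $\sum_i T_i=n$ and $T_1\sim n$ satisfy $\sum_i\binom{T_i}{2}=\binom{n}{2}(1-o(1))$, we get $M\sim\binom{n}{2}\sim\binom{n-k+1}{2}$.

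For parts~(b) and~(c), with $k\gg n^{1/3}(\log n)^{4/3}(\log\log n)^{1/3}$, the computation above collapses exactly because $\eps_1^{3}n=\Theta(n/k^{3})\to 0$: the would-be giant reaches size $\Theta(n/k)=O(n^{2/3})$ while still inside the critical window, so no clean giant forms before collisions reshape the process, and I would analyse the $k$-process directly. First I would run through the subcritical regime up to $m_{0}=n/2-g(n)\,n^{2/3}$ for a suitable slowly growing $g$; here $\Pk(n,m)\subseteq\G(n,m)$, all components have size $O(g^{-2}n^{2/3}\log n)$, collisions have had negligible effect, and $\Pk(n,m_{0})$ has the coarse statistics of $\G(n,m_{0})$. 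From $m_{0}$ onward, I would show that a.a.s.\ no component ever reaches size $\ell^{*}=\big(n\log(k/n^{1/3})\log^{4}n/k^{3}\big)^{1/2}n$ (up to the implied constant, the quantity in part~(b)). For a component to reach size $\ell$ it must absorb $\Omega(\ell)$ of non-special mass, but a size-$\ell$ component absorbs mass only at rate $O(\ell/n)$ per examined pair while the at most $n$ units of non-special mass are consumed simultaneously by all $k$ special components; summing the resulting probabilities over the $O(\log n)$ dyadic scales above $\ell^{*}$, the $k$ candidate components, and the relevant times --- with the concentration tools of Section~\ref{sec:concentration-tools} controlling the fluctuations, which are largest in the critical window --- should show $\ell^{*}$ is never reached, and $\ell^{*}$ is exactly the scale at which this union bound (including the error terms of the concentration tools) becomes $o(1)$; the extra $(\log n)^{4/3}(\log\log n)^{1/3}$ room in the hypothesis is what makes $\ell^{*}=o(n)$. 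Part~(c) is the extreme case $k\ge n/(\omega\log n)$, where non-special mass is so scarce relative to the number of special components that collisions are essentially immediate and the same, now nearly elementary, argument caps every component at $O(\omega^{3/2}\log^{4}n)$. In all three parts the bound on $M$ then follows from $M=\sum_i\binom{T_i}{2}\le\tfrac12 L_1\sum_i T_i=\tfrac12 L_1 n$.

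The hard part will be carrying case~(b) through the critical window $m=n/2+O(n^{2/3})$. There the Erd\H{o}s--R\'enyi comparison is at its weakest --- susceptibility and the largest component are their most volatile --- yet, for $k$ just above $n^{1/3}$, this is precisely the scale at which the collision constraint first bites. Turning the intuition that the constraint genuinely spreads the mass among the $\sim k$ special components (rather than letting one capture a positive fraction of the vertices) into a rigorous argument is the crux, and I expect it to be responsible both for the $(\log n)^{4/3}(\log\log n)^{1/3}$ loss in the hypothesis and for the $\big(n\log^{5}n/k\big)^{1/2}$ gap between the proven bound $\ell^{*}$ and the conjectured truth $\Theta(n/k)$.
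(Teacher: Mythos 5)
Your overall strategy is the paper's: couple with the Erd\H{o}s--R\'enyi process, use the supercritical/subcritical component statistics and the susceptibility, then control the late phase with a rich-get-richer martingale in which each absorbed non-special component joins a special component with probability proportional to its size. Part (a) matches the paper almost step for step (your $m_1$ is the paper's $m_2$, and your supermartingale is formalized as the $(\ccc,x,y)$-process of Lemma~\ref{lem:cxy_process}). Two remarks there. First, the paper does not show that collisions are absent before the giant captures a special vertex; it instead handles the case where the merging component contains several special vertices. Your union bound is a legitimate alternative, but the relevant susceptibility at that time is $\Theta(k\omega)$, not $\Theta(k)$, so the bound is $O(k^3\omega/n)$. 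Second, and more substantively, your estimate of $C_0$ as a sum of $k-1$ component sizes ``each of mean $\Theta(1/\eps_1)$'' uses the distribution of the graph outside the giant \emph{at the stopping time} $m_2$; since $m_2$ is defined by the graph itself, this is a conditioned object, and the paper needs a separate exchangeability claim (inside the proof of Lemma~\ref{lem:prop_m2}) before it may treat $\G^L(n,m_2)$ as a uniform $\G(n',m')$ and apply Lemma~\ref{lem:susceptibility}. That step is missing from your plan.

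In part (b) the gap is larger: the plan never supplies the two quantitative inputs that make the argument close. (i) A special component of size $\ell$ captures absorbed mass at rate $\ell/(\text{current total special mass})$, not $O(\ell/n)$, so you need a \emph{lower} bound on the total size of all special components at the start of the late phase. The paper chooses the subcritical time $m_3=(n/2)(1-\lambda_3 n^{-1/3})$ with $\lambda_3=(kn^{-1/3}\log(k/n^{1/3}))^{1/2}$ precisely to balance $L_1=\Theta(n/k)$ against a total special mass of $\Omega((nk/\log(k/n^{1/3}))^{1/2})$, and proving concentration of that total for $k$ near $n^{1/3}$ requires a breadth-first-search exploration from the special set (Azuma is too weak there); this is Lemma~\ref{lem:prop_m3}(b), and it is where the exponent in your $\ell^*$ actually comes from. (ii) You need an upper bound, valid for \emph{all} $m\ge m_3$ including the critical window, on the size of every non-special component ever absorbed; Lemma~\ref{lem:prop_m3}(c) shows any component reaching size $n\log n/k$ a.a.s.\ already contains a special vertex. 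With (i) and (ii) in hand the ``crux'' you defer dissolves: the rich-get-richer process sees only the increment bound $c=n\log n/k$ and the initial ratio, so nothing delicate happens inside the critical window, and the $\log^4 n$ arises from inflating the initial value $x$ to $\Theta(n\log^2 n/k)$ so the failure probability is $o(n^{-1})$ and survives the union bound over the $k$ special components. Finally, the paper gets part (c) by monotonicity in $k$ (Observation~\ref{obiggerk}) applied at $k'=n/(\omega\log n)$ rather than by rerunning the argument, which is necessary because the part (b) machinery assumes $k\ll n/\log n$.
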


Part (a) is proved in Section~\ref{sec:k-small}; part (b) is proved in Section~\ref{sec:k-large}; part (c) follows immediately from part (b) and Observation~\ref{obiggerk} (see Subsection~\ref{sec:part_c} for more details).

\medskip

Note that, trivially and deterministically, we get that $M(n,k) \le { n - k + 1 \choose 2}$; the upper bound holds for the extremal graph on $n$ vertices, $k$ components, and maximum number of edges (union of a complete graph on $n-k+1$ vertices and $k-1$ isolated vertices). Understanding whether $M(n,k)$ is close to this trivial bound has important implications for the applications we consider below.

\section{Motivation}\label{sec:motivation}

Before we formally state the combinatorial data fusion problem, let us start with an important special case. The {\em multiway cut} problem~\cite[Chapter 4]{vazirani} is a standard NP-hard problem in graph theory.  We state it as follows. Let $G$ be a nonnegatively weighted undirected graph with vertex set $V$ and edge set $E$.  Let $S \subseteq V$.  Find the set $R$ of edges of least total weight such that no two vertices in $S$ are in the same connected component of the graph $G_R$ obtained by removing the edges in $R$  from $G$.

There is a natural greedy algorithm that can be used to address this problem:

\begin{algo}{(Edge-first greedy algorithm)}
\begin{enumerate}
\item Begin with an empty graph $G_0$ with the same vertices as $G$.
\item Order the edges of $G$ as $e_1, e_2, \ldots, e_m$ by decreasing weight, breaking ties at random.
\item For each $i$ with $1 \le i \le m$, construct $G_i$ as follows.  If adding $e_i$ to $G_{i-1}$ does not create a connected component containing more than one element of $S$, then $G_i$ is $G_{i-1}$ with $e_i$ added;  otherwise $G_i = G_{i-1}$.
\item Return $G_R=G_m$.
\end{enumerate}
\end{algo}

Clearly this algorithm runs in polynomial time; thus we cannot expect it to find the optimal solution.  In fact, it does not approximate the best solution within any constant factor.

\begin{example}
Let $G = K_{n^2}$, and index the vertices as $v_{i,j}$ for $1 \le i,j \le n$.  Let $S = \{v_{1,j}: 1 \le j \le n\}$, and let the weight of the edge joining $v_{i,j}$ to $v_{k,\ell}$ be $1+\epsilon$ if $j = \ell$ and $1$ otherwise. Note that the total weight is equal to $\binom{n^2}{2} + \binom{n}{2} n\epsilon \sim n^4/2$.  The greedy algorithm adds all edges of weight $1+\epsilon$ and then stops, thus producing a solution of total weight $n \binom{n}{2} (1+\epsilon) \sim n^3 (1+\epsilon) / 2$. On the other hand, let $R$ be the set of all edges incident to at least one vertex of $S \setminus \{v_{1,1}\}$. Then $G_R$ is a solution of total weight $\binom{n^2}{2} + \binom{n}{2} n\epsilon - |R| \sim n^4/2$, since $|R| = \binom{n-1}{2} + (n-1)(n^2-(n-1))+(n-1)^2\epsilon \sim n^3$. This is asymptotically equal to the weight of the whole graph and is larger by a factor of $n$ than the solution found by the greedy algorithm.
\end{example}

Nevertheless, understanding the performance of the greedy algorithm may give some insight into the general problem. As a starting point we considered how it performs on the complete graph.  Let $G = K_n$, and let $S$ be any set of $k$ vertices.   Let the edges either (i) all have weight $1$ or (ii) have i.i.d.\ random weights.   We noted that in either case the greedy algorithm amounts to the $k$-process defined in Section~\ref{subsec:k-process}. 

The main theorem (Theorem \ref{thm:main}) shows that a.a.s.\ for $k \ll n^{1/3}$ we obtain a solution where all but one of the components are very small, while for $k \gg n^{1/3}(\log n)^{4/3}(\log \log n)^{1/3}$ we do not.  It is not hard to see that, in the case where all edges have weight one, the optimal solution has $k-1$ components of size one; so for $k \ll n^{1/3}$ the greedy solution is close to optimal and for larger $k$ it is far from optimal.

\medskip

The multiway cut problem is a special case of the combinatorial data fusion problem which was introduced by Darling et al.~\cite{dhpp}, and can be used to describe many situations that are important in applications.

\medskip

\noindent{\bf The Combinatorial Data Fusion Problem:} Let $G$ be a nonnegatively weighted undirected graph with vertex set $V$ and edge set $E$.  Fix a set $S \subseteq 2^V$ of sets of vertices, called the {\em forbidden sets}.  A set $T \subseteq E$ of edges of $G$ is a solution to the {\em combinatorial data fusion problem} associated to $(G, S)$ if, after removing the edges of $T$, no set of vertices in $S$ is wholly contained in a single connected component. As before, our goal is to find a set $T$ of least total weight.

\medskip

The combinatorial data fusion problem generalizes a number of standard graph-theoretic problems.  For example, suppose that $S$ consists of all $2$-element subsets of a given subset $U \subseteq V$.  Then we have the {\em multiway cut} problem described above.  More generally, if $S$ only contains sets of order $2$, then we have the {\em multicut} problem~\cite[Problem~18.1]{vazirani}. In a rather different direction, let $G$ be a star. Without loss of generality we may assume that no element of $S$ contains the central vertex{: if there is such a set with $2$ elements, then they are joined by an edge, so we delete the edge and the non-central vertex on it and proceed.  If there is such a set with $>2$ elements, it is disconnected if and only if the subset obtained by removing the central vertex is disconnected}.  In this case, disconnecting a set of vertices is the same as removing the edge incident on one of them.  Thus we have the {\em minimum hitting set} problem, which is another standard NP-complete problem~\cite[SP8, p.~222]{gj}:  given a collection of subsets $S$ of a finite set $U$, a {\em hitting set} is a subset of $U$ that meets every element of $S$.  The problem is to determine whether there is a hitting set of size less than $k$.

Considering the more general setting of the combinatorial data fusion problem leads to a broad class of random graph processes:

\medskip

\noindent{\bf The CDF-process:}  Begin with $n$ vertices and specify a collection of {\em forbidden} subsets of those vertices.  Add random edges one at a time by repeatedly choosing a pair of vertices uniformly from all non-edges whose addition to the graph would not form a component containing a forbidden set of vertices.

\medskip

The same questions arise:  how many edges will be added until the process is complete?  What will the components look like? We have not studied this more general process at all, beyond the special case of the $k$-process.

\section{The random graph process and its properties}\label{sec:more_definitions}

In this section, we introduce the random graph process $\G(n,m)$ which will be very useful in the analysis of $\Pk(n,M)$. In particular, a good understanding of the process of forming a giant component in $\G(n,m)$ is needed. We summarize our knowledge on this topic in the last two subsections. 

\subsection{Random graph models}

We fix $n$ vertices. $\G(n,m)$ is the random graph selected uniformly from all graphs on those vertices and with exactly $m$ edges. Equivalently, we can select $\G(n,m)$ by the following process:

\medskip

\noindent{\bf Random graph process}: $e_1, \ldots, e_{\binom{n}{2}}$ is a sequence of pairs of vertices selected uniformly without repetition.  $\G(n,m)$ is the graph formed by edges $e_1,\ldots,e_m$.

\medskip

We can couple this to a process that is essentially identical to $\Pk(n,m)$, differing only in what $m$ counts:

\medskip

\noindent{\bf $\Gk(n,m)$-process}: $S$ is a set of $k$ {\em special} vertices from amongst our $n$ vertices. As before, $e_1, \ldots, e_{\binom{n}{2}}$ is a sequence of pairs of vertices selected uniformly without repetition. $\Gk(n,m)$ is the graph formed by starting with the empty graph on $n$ vertices and considering edges $e_1,\ldots,e_m$ one-at-a-time; each time, we add $e_i$ unless it joins two components that each contain a special vertex.

\medskip

So the number of edges in $\Gk(n,m)$ is not $m$, rather it is $m$ minus the number of edges that were skipped. However, note that $\Gk(n,m={n\choose2})$ is identical to $\Pk(n,M)$ (recall that $M=M(n,k)$ is defined to be the number of edges in $\Pk$ once no more edges can be added; that is, after all ${n\choose2}$ edges are considered). So it suffices to prove Theorem~\ref{thm:main} for $\Gk(n,m={n\choose2})$.

\begin{remark}\label{rcouple}
It will be useful to note that the sequence of edges $e_1,\ldots,e_m$ is independent of the set of special vertices.  By symmetry, we can assume that the special vertices are chosen uniformly from amongst the $n$ vertices, and so we can first choose $e_1,\ldots,e_m$ and then choose the $k$ vertices uniformly.
\end{remark}

Note that we can couple $\Gk(n,m)$ to the random graph process $\G(n,m)$ by using the same sequence $e_1,\ldots,e_m$. This coupling will be valuable as it will allow us to apply some deep and technical results regarding the giant component of $\G(n,m)$. However, there are also some implications that are easy and straightforward, but useful at the same time. 

For example, one can couple $\Gk(n,m)$ and $\mathbb{\hat{\G}}_{k+1}(n,m)$ by making sure that each special vertex in $\Gk(n,m)$ is also special in  $\mathbb{\hat{\G}}_{k+1}(n,m)$. Then, it is clear that a collision in $\Gk(n,m)$ is also a collision in $\mathbb{\hat{\G}}_{k+1}(n,m)$. Moreover, at every step the set of components of $\Gk(n,m)$ differs from the set of components of $\mathbb{\hat{\G}}_{k+1}(n,m)$ only in that possibly two of the special components in the latter process are joined into one component in the former. This yields the following monotonicity result:

\begin{observation}\label{obiggerk} For any $1\leq k_1\leq k_2\leq n$, and for any $m$:
\begin{enumerate}
\item[(a)] the largest special component in $\mathbb{\hat{\G}}_{k_1}(n,m)$ is at least as big as the largest component in $\mathbb{\hat{\G}}_{k_2}(n,m)$; and
\item[(b)] $\hat{M}(n,k_1) > \hat{M}(n,k_2)$.
\end{enumerate}
\end{observation}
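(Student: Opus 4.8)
The plan is to make the coupling described just before the statement completely explicit and then track, step by step, how the component structures of $\Gk[k_1]$ and $\Gk[k_2]$ relate to one another. First I would set up the coupling so that the special set $S_1$ of size $k_1$ is a subset of the special set $S_2$ of size $k_2$, and both processes read the same edge sequence $e_1,\ldots,e_{\binom n2}$ (legitimate by Remark~\ref{rcouple}). I would then prove by induction on $m$ the following invariant: the partition of $V$ into components given by $\Gk[k_1](n,m)$ is a \emph{coarsening} of the partition given by $\Gk[k_2](n,m)$, and moreover every block of the $\Gk[k_2]$-partition that contains a vertex of $S_2$ — a \emph{$k_2$-special component} — is entirely contained in a single $k_1$-special component, except that at most pairs of such blocks get merged (more precisely: each $k_1$-special component is a union of $k_2$-special components together with some $k_2$-non-special components, and each $k_1$-non-special component equals a $k_2$-non-special component). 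The base case $m=0$ is trivial since both are empty.

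For the inductive step I would consider the edge $e_{m+1}=\{u,v\}$ and split into cases according to whether it is added in each process. The key point is that $e_{m+1}$ is \emph{rejected} in $\Gk[k_2]$ exactly when $u$ and $v$ lie in distinct $k_2$-special components, and rejected in $\Gk[k_1]$ exactly when they lie in distinct $k_1$-special components; by the invariant the latter condition implies the former, so whenever an edge is accepted in $\Gk[k_1]$ it is also accepted in $\Gk[k_2]$ — which already shows $\Gk[k_1]$ has at least as many edges as $\Gk[k_2]$ at every stage, and in particular gives part~(b) once one argues that the $\Gk[k_1]$-process cannot have reached $k_1$ components while $\Gk[k_2]$ still has more than $k_2$. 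The delicate sub-case is when $e_{m+1}$ is accepted in $\Gk[k_2]$ but rejected in $\Gk[k_1]$: here $u,v$ are in the same $k_1$-special component (so adding the edge there would create a forbidden merge) but in different $k_2$-special components. One checks that merging these two $k_2$-special components keeps them inside the same $k_1$-special component, so the coarsening invariant is preserved and no new ``double merge'' is created beyond what the invariant already allows. The remaining sub-cases (accepted in both, rejected in both) are routine.

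Part~(a) then follows from the invariant almost immediately: every component of $\Gk[k_2](n,m)$, in particular its largest, is contained in some component of $\Gk[k_1](n,m)$, and if the $\Gk[k_2]$-component contains a special vertex then the containing $\Gk[k_1]$-component also contains a special vertex (indeed contains that same $S_1$- or $S_2$-vertex — one should be slightly careful that a vertex of $S_2\setminus S_1$ still lies in a $k_1$-special component, which holds because its $k_2$-special component is contained in a $k_1$-special component by the invariant); hence the largest $k_1$-special component has size at least that of the largest $k_2$-special component. For part~(b), $\hat M(n,k)$ is the first time the process has exactly $k$ components; since at every time $\Gk[k_1]$ has at least as many edges as $\Gk[k_2]$ and each of its components is a union of $\Gk[k_2]$-components, $\Gk[k_1]$ always has at most as many components as $\Gk[k_2]$, so it reaches $k_1$ components no earlier than $\Gk[k_2]$ reaches $k_2$ components; the strictness ($>$ rather than $\ge$) comes from the fact that, at the moment $\Gk[k_2]$ first has $k_2$ components, $\Gk[k_1]$ still has strictly more than $k_1$ components — because the $k_2$ special vertices of $S_2$ lie in $k_2$ distinct components there, at least $k_2-k_1$ of which are non-special in the $k_1$-process and are still ``active,'' forcing at least one more edge before $k_1$ components can be reached.

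I expect the main obstacle to be stating the inductive invariant in a form strong enough to push through the tricky sub-case (edge accepted in $\Gk[k_2]$, rejected in $\Gk[k_1]$) while still being simple enough to verify in the routine cases — in particular, making precise the bookkeeping that a $k_1$-special component is a union of $k_2$-special components plus possibly some non-special ones, and that this structure is exactly preserved under every type of step. Once the invariant is correctly formulated, both conclusions are short corollaries, and the strictness in~(b) is the only place where a small extra counting argument is needed.
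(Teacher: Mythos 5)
Your overall strategy --- couple the two processes on the same edge sequence with $S_1\subseteq S_2$ and induct on $m$ to show that the $\mathbb{\hat{\G}}_{k_1}$-partition coarsens the $\mathbb{\hat{\G}}_{k_2}$-partition --- is exactly the paper's (the paper just increments $k$ one special vertex at a time, which keeps its invariant very clean: the two partitions differ by at most one merge of a single pair of special components). The trouble is that the invariant you formulate is false, and the false clauses are precisely the ones your key deductions lean on. Take $S_1=\{a,b\}$, $S_2=\{a,b,c,d\}$ and let the first pair considered be $\{c,d\}$: it is accepted in $\mathbb{\hat{\G}}_{k_1}$ (neither endpoint's component meets $S_1$) but rejected in $\mathbb{\hat{\G}}_{k_2}$. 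Now $\{c,d\}$ is a $k_1$-\emph{non}-special component that is the union of two $k_2$-special components, so both ``every $k_2$-special component is contained in a single $k_1$-special component'' and ``each $k_1$-non-special component equals a $k_2$-non-special component'' fail (and a $k_1$-component can absorb many $k_2$-special blocks, not just pairs). Your delicate sub-case is also garbled: rejection of $e=\{u,v\}$ in $\mathbb{\hat{\G}}_{k_1}$ means $u,v$ lie in \emph{distinct} $k_1$-special components (an edge inside a component is always accepted), and acceptance in $\mathbb{\hat{\G}}_{k_2}$ means the two $k_2$-components being merged are \emph{not} both special --- the opposite of what you assert. As written, that sub-case would merge two $k_2$-components lying in different $k_1$-components and destroy the coarsening; the actual content of the induction is to show this case is impossible.

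The invariant that works is: (i) each $k_1$-component is a disjoint union of $k_2$-components, and (ii) whenever a $k_1$-component consists of two or more $k_2$-components, \emph{every} one of those pieces is $k_2$-special. Clause (ii) is what kills the bad case (reject in $k_1$, accept in $k_2$): rejection in $k_1$ forces both $k_1$-components to be $k_1$-special, and (ii) then forces the two relevant $k_2$-components to be $k_2$-special, so the edge is rejected in $k_2$ too. This yields ``collision in $\mathbb{\hat{\G}}_{k_1}$ implies collision in $\mathbb{\hat{\G}}_{k_2}$'' (note that you state the resulting edge-count implication in the wrong direction before using the correct one), and (ii) is also what the strictness in (b) needs: if $\mathbb{\hat{\G}}_{k_1}$ were already frozen at $k_1$ components while $\mathbb{\hat{\G}}_{k_2}$ still had a non-special component $D$, then $D$ would sit strictly inside a $k_1$-special component, hence inside a split one, hence be $k_2$-special by (ii) --- a contradiction. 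Your count ``the $k_2$ special vertices lie in $k_2$ distinct components of $\mathbb{\hat{\G}}_{k_1}$'' is unavailable, by the same $\{c,d\}$ example. Finally, for (a) the coarsening alone only bounds the largest $k_2$-component by the (possibly non-special) $k_1$-component containing it; the statement is applied in the paper at $m=\binom{n}{2}$, where every component of either process is special and the argument does close, but your intermediate claim that a special $k_2$-component always lands in a special $k_1$-component is exactly the false clause above.
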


Here is another implication. Note that if $\G(n,m)$ is connected, then $\Gk(n,m)$ has $k$ components. Since $\G(n,m)$ is a.a.s.\ connected for $m = n \log n / 2  + n \omega$ (where $\omega=\omega(n)$ is any function tending to infinity as $n \to \infty$), a.a.s.\ $\hat{M}(n,k) \le n \log n / 2 + n \omega$ for any $2 \le k \le n$. Since, $\G(n,m)$ is a.a.s.\ disconnected for $m = n \log n / 2 - n \omega$, this bound is sharp for $k=1$: $\hat{M}(n,1)\sim n \log n / 2$. In fact, it is straightforward to prove that $\hat{M}(n,k) \sim n \log n / 2$ for $k \ll n^{1/3}$. Indeed, a.a.s.\ many (precisely $(1+o(1)) e^{2\omega}$) non-special vertices in $\Gk(n,m)$ are still isolated at time $m = n \log n / 2 - n \omega$; hence, a.a.s.\ there are more than $k$ components in $\Gk(n,m)$. It follows that a.a.s.\ $\hat{M}(n,k)$ is at least $m$ minus the number of collisions up to this point of the process. More importantly, by Theorem~\ref{thm:main}(a), a.a.s.\ the giant component in $\Pk(n,\hat{M})$ has size $n(1-o(1))$, and so a.a.s.\ all collisions that occurred in $\Gk(n,m)$ must involve vertices from a small set of size $o(n)$. Finally, one can show that a.a.s.\ removing any set of size $o(n)$ from $\G(n,m)$ decreases the number of edges by $o(n \log n)$. As this is a trivial bound for the number of collisions in $\Gk(n,m)$, we get that a.a.s.\ $\hat{M}(n,k) \sim  n \log n / 2$, provided that $k \ll n^{1/3}$. We do not provide a formal proof here as it seems that understanding the behaviour of $\hat{M}(n,k)$ for $k \gg n^{1/3}$ require more work and a better understanding of the process. $\hat{M}(n,k)$ continues to decrease as $k$ increases (see Observation~\ref{obiggerk}) and, trivially, $\hat{M}(n,n)=0$. But the behaviour of this is unknown.

\medskip

We will also make use of the $\Gp(n,p)$ model:  we begin with $n$ vertices and then decide to include each of the $n\choose 2$ possible edges independently with probability $p$. A standard and very useful fact is that we can typically translate a.a.s.\ properties between $\G(n,m)$ and $\Gp(n,p)$ when $m\approx p{n\choose 2}$.  For example, in this paper we will use the following, which comes from~(1.6) in~\cite{JLR}. 

\begin{lemma} \label{lem:gnp_to_gnm}
Let $m=m(n)$ be any function such that $m \le n$, let $\gamma=\gamma(n)$ be any function such that $\gamma \gg \sqrt{n}$, and take $p=p(n) = (m-\gamma)/{n \choose 2}$. Then one can couple the two processes such that a.a.s.\ $\Gp(n,p) \subseteq \G(n,m)$.
\end{lemma}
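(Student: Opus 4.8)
The plan is to use the standard transfer between $\Gp(n,p)$ and $\G(n,m)$ via a coupling through a uniformly random edge-ordering; the only step requiring any work is a concentration bound on the number of edges of $\Gp(n,p)$. Write $N=\binom{n}{2}$. First I would set up the coupling: let $e_1,\dots,e_N$ be a uniformly random ordering of all pairs of vertices, so that $\G(n,m)$ has edge set $\{e_1,\dots,e_m\}$ (this is precisely the random graph process). Independently, sample $X\sim\bin(N,p)$ and let $H$ be the graph with edge set $\{e_1,\dots,e_X\}$. Since the first $\ell$ elements of a uniformly random ordering form a uniformly random $\ell$-subset of the pairs, $H$ conditioned on $\{X=\ell\}$ has the distribution of $\G(n,\ell)$; averaging over $\ell$ against the binomial weights shows that $H$ has exactly the distribution of $\Gp(n,p)$. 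On the event $\{X\le m\}$ we have $\{e_1,\dots,e_X\}\subseteq\{e_1,\dots,e_m\}$, i.e.\ $H\subseteq\G(n,m)$, so it remains only to show $\Prob(X\le m)=1-o(1)$.

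Second, I would establish the required concentration. We have $\E X = pN = m-\gamma$ and $\V X = p(1-p)N \le pN = m-\gamma \le m \le n$, so the standard deviation of $X$ is at most $\sqrt n$. Hence, by Chebyshev's inequality,
$$
\Prob(X > m) = \Prob\big(X - \E X > \gamma\big) \le \frac{\V X}{\gamma^2} \le \frac{n}{\gamma^2} = o(1),
$$
where the last step uses $\gamma \gg \sqrt n$. (One may assume $\gamma < m$, since otherwise $p \le 0$ and the statement is vacuous with $\Gp(n,p)$ the empty graph.) Combined with the coupling above, this gives that a.a.s.\ $\Gp(n,p)\subseteq\G(n,m)$.

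As for the main obstacle: there is essentially none here — the lemma is quoted as~(1.6) of~\cite{JLR} — but the one point to be careful about is that the two hypotheses are used in tandem and neither can be dropped: the bound $m\le n$ is exactly what forces $\V X \le n$, and the assumption $\gamma \gg \sqrt n$ is exactly what makes $n/\gamma^2 \to 0$. A more refined version (with explicit error probabilities, or Chernoff instead of Chebyshev) is available if needed, but Chebyshev already suffices for the a.a.s.\ statement as stated.
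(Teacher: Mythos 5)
Your proposal is correct and follows essentially the same route as the paper: expose the number of edges $X\sim\bin\bigl(\binom{n}{2},p\bigr)$ of $\Gp(n,p)$, observe that conditional on $X=\ell$ it is distributed as $\G(n,\ell)$ and hence sits inside $\G(n,m)$ under the natural coupling whenever $X\le m$, and then show $\Prob(X>m)=o(1)$. The only (immaterial) difference is that you use Chebyshev's inequality where the paper uses a Chernoff bound $\exp(-\gamma^2/(2m))$; both rely on $m\le n$ and $\gamma\gg\sqrt{n}$ in exactly the way you describe.
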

\begin{proof}
Note that $\Gp(n,p)$ can be generated in two steps. First, we expose the total number of edges $M$, the binomial random variable $\bin( {n \choose 2}, p)$ with $\E[M] = m - \gamma$. Then, clearly, $\Gp(n,p) = \G(n,M)$. It follows from Chernoff's bound that
$$
\Pr \left( M \ge m \right) = \Pr \left( M \ge \E[M]+\gamma \right) \le \exp \left( - \frac {\gamma^2}{2m} \right) = o(1),
$$
as $\gamma \gg \sqrt{n}$. We get that a.a.s.\ $\Gp(n,p) = \G(n,M) \subseteq \G(n,m)$, and the proof of the lemma is finished.
\end{proof}

\subsection{Largest component in $\G(n,m)$}\label{sec:known-results}

We will need the following well-known result on the component sizes of $G(n,m)$ when $m$ is close to the critical point $n/2$.  These bounds follow immediately from Theorems~5 and~6 of~\cite{luczak1990}.

\begin{lemma}\label{lem:tluc} 
A.a.s.\ the random graph process is such that:
\begin{enumerate}
\item[(a)] For every integer $m$ where $m=\lfloor\frac{n}{2}(1-\lambda n^{-1/3})\rfloor$ for some $0\ll\lambda\ll n^{1/3}$, the largest component in $\G(n,m)$ has size $\Theta(n^{2/3}\lambda^{-2}\log\lambda)$.

\item[(b)] For every integer $m$ where $m=\lfloor\frac{n}{2}(1+\lambda n^{-1/3})\rfloor$ for some $0\ll\lambda\ll n^{1/3}$
\begin{enumerate}
\item[(i)] the largest component in $\G(n,m)$ has size $(2+o(1))\lambda n^{2/3}$;
\item[(ii)] the second largest component in $\G(n,m)$ has size $\Theta(n^{2/3}\lambda^{-2}\log\lambda)$.
\end{enumerate}
\end{enumerate}
\end{lemma}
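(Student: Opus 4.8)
The plan is to derive both parts directly from \L{}uczak's detailed analysis of the near‑critical random graph process~\cite{luczak1990}: his Theorems~5 and~6 describe, for $m=\frac n2(1\pm\lambda n^{-1/3})$ in exactly the range $0\ll\lambda\ll n^{1/3}$, the order of magnitude of the largest component (and, above the critical window, of the giant and of the second largest component), so the task reduces to matching parametrizations and reading off the stated $\Theta$‑bounds. For transparency I would also include a short sketch of why the bounds take this shape, via the exploration process; this also explains where the slightly unusual factor $\log\lambda$ comes from.

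For part~(a), set $\eps:=\lambda n^{-1/3}$, so the relevant density is $c=1-\eps$ with $\eps\to0$ and $\eps^3 n=\lambda^3\to\infty$. Exploring the component of a fixed vertex dominates it by the total progeny $T$ of a $\mathrm{Poisson}(1-\eps)$ Galton--Watson tree, and the Otter--Dwass formula together with Stirling's approximation gives $\Pr(T\ge t)=O\!\left(\eps^{-2}t^{-3/2}e^{-t\eps^2/2}\right)$ once $t\eps^2\gg1$. The crucial bookkeeping step is to bound the expected \emph{number of components} of size at least $t$ (not the number of vertices lying in such components) by $\tfrac{n}{t}\Pr(T\ge t)$; this is $o(1)$ as soon as $t\ge(2+o(1))\eps^{-2}\log(\eps^3 n)$, which after substituting $\eps=\lambda n^{-1/3}$ becomes $O(n^{2/3}\lambda^{-2}\log\lambda)$. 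A matching lower bound of the same order follows from a second‑moment computation on the number of tree components of that size (or one simply quotes \L{}uczak). Passing between $\Gp(n,c/n)$ and $\G(n,m)$ along the way is routine (cf.\ Lemma~\ref{lem:gnp_to_gnm}), or one uses \L{}uczak's statement, which is already phrased for $\G(n,m)$.

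For part~(b), write $c=1+\eps$ with $\eps=\lambda n^{-1/3}$ and let $\rho=\rho(c)$ be the survival probability of the $\mathrm{Poisson}(c)$ branching process; then $\rho=2\eps+O(\eps^2)\sim2\lambda n^{-1/3}$, and the standard supercritical estimate gives that a.a.s.\ the largest component has $(1+o(1))\rho n\sim2\lambda n^{2/3}$ vertices, which is~(i). For~(ii) I would invoke the duality between the supercritical graph and a subcritical one: conditioned on its vertex set, the graph induced outside the giant is distributed essentially as $\Gp((1-\rho)n,\,c/n)$, i.e.\ a random graph on $n'\sim n$ vertices of density $c'=c(1-\rho)=1-\eps+O(\eps^2)$, hence subcritical with parameter $\eps'\sim\eps$ (equivalently $\lambda'\sim\lambda$). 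Part~(a) then identifies its largest component --- the second largest overall --- as $\Theta(n^{2/3}\lambda^{-2}\log\lambda)$.

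The only genuinely delicate points are internal to part~(a): obtaining the upper tail of the subcritical component sizes with the sharp logarithmic factor $\log(\eps^3 n)$ rather than the crude $\log n$ (this is precisely what forces the first‑moment step to count components, and uses $\eps^3 n\to\infty$), and the matching lower bound. Since both are exactly what \L{}uczak establishes, the work in the write‑up reduces to verifying that his hypotheses coincide with $0\ll\lambda\ll n^{1/3}$ and carrying out the substitution $\eps=\lambda n^{-1/3}$; the one extra place needing a line of care is the duality step in~(b)(ii), to justify the induced distribution on the complement of the giant.
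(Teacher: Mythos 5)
Your proposal matches the paper exactly: the lemma is stated there without proof, as following ``immediately from Theorems~5 and~6 of \cite{luczak1990}'', which is precisely your primary plan, and your parameter translation $\eps=\lambda n^{-1/3}$, $\eps^3n=\lambda^3$ correctly explains the $\log\lambda$ factor. The supplementary exploration-process/duality sketch is sound, with the one caveat that the lemma is asserted a.a.s.\ \emph{uniformly over all $m$} in the given ranges (a property of the whole process), which pointwise first/second-moment computations for a fixed $m$ do not by themselves deliver --- but \L{}uczak's process-level statements do.
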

The range of $m$ covered in part (a) is referred to as the {\em subcritical range}; the range covered in part (b) is the {\em supercritical range}. We also know that the giant component is formed from smaller ones during the so-called \emph{critical phase} when $m=\lfloor\frac{n}{2}(1+\Theta( n^{-1/3}))\rfloor$. During the critical phase, the largest component  has cardinality of order $n^{2/3}$.

\subsection{Susceptibility}

The susceptibility $\chi(G)$ of a graph $G$ (deterministic or random) is defined as the expected size of the component containing a random vertex. If the list of component sizes is $s_1, s_2, \ldots, s_r$, then
$$
\chi(G) = \sum_{i=1}^r \frac {s_i}{n} s_i = n^{-1} \sum_{i=1}^r s_i^2.
$$
Without loss of generality, we may assume that $s_1 \ge s_2 \ge \ldots \ge s_r$. 

For the supercritical case, one can show that the giant component will dominate all other terms in the sum and so a.a.s.\ $\chi( \G(n,m) ) \sim s_1^2 / n \sim 4 \lambda^2 n^{1/3}$ (see Appendix~A in~\cite{jansonluczak2008}). Similarly, for the critical phase, there are several components of order $n^{2/3}$ but a.a.s.\ $\chi( \G(n,m) ) = \Theta( s_1^2 / n) = \Theta( n^{1/3} )$ (see Appendix~B in~\cite{jansonluczak2008}). The biggest challenge is to analyze the subcritical phase and this is the main focus of~\cite{jansonluczak2008}, where the following is proved (see Theorem~1.1).

\begin{lemma}\label{lem:susceptibility}
If $n/2 - m \gg n^{2/3}$, then a.a.s.
$$
\chi( \G(n,m) ) \sim \frac {n/2}{n/2-m}.
$$
\end{lemma}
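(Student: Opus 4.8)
The plan is to track the susceptibility along the random graph process itself, without detouring through $\Gp(n,p)$. Set $Y_t:=n\,\chi(\G(n,t))=\sum_i s_i^2$, where $s_1\ge s_2\ge\cdots$ are the component sizes of $\G(n,t)$; since $\chi(\G(n,m))=Y_m/n$, it suffices to show that a.a.s.\ $Y_m\sim n^2/(n-2m)$. The starting point is an exact one-step identity. Conditioning on $\G(n,t-1)$, the $t$-th edge is uniform among the $N_{t-1}:=\binom n2-(t-1)$ remaining non-edges; since all $s_is_j$ pairs between two distinct components $C_i,C_j$ are non-edges while merging them raises $Y$ by exactly $2s_is_j$,
\[
\E\bigl[Y_t-Y_{t-1}\mid\G(n,t-1)\bigr]=\frac{1}{N_{t-1}}\sum_{i\ne j}s_i^2s_j^2=\frac{Y_{t-1}^2-\sum_i s_i^4}{N_{t-1}},
\]
and $N_{t-1}=\tfrac{n^2}{2}(1+O(1/n))$ throughout, since $t\le m\le n/2$.

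Dropping the $\sum_i s_i^4$ term, this is a discretisation of $\dot y=2y^2/n^2$, whose solutions satisfy $y(t)^{-1}+2t/n^2=\mathrm{const}$. This suggests the potential $h_t:=Y_t^{-1}+2t/n^2$, which equals $1/n$ along the deterministic trajectory $Y_t=n^2/(n-2t)$. Writing $Y_t=Y_{t-1}+\Delta Y_t$ and expanding $Y_t^{-1}$ in powers of $\Delta Y_t/Y_{t-1}$, the conditional drift $\E[h_t-h_{t-1}\mid\G(n,t-1)]$ equals an $O(n^{-3})$ term, plus $\sum_i s_i^4/(N_{t-1}Y_{t-1}^2)$, plus a term of order $\E[(\Delta Y_t)^2\mid\G(n,t-1)]/Y_{t-1}^3$. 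The key point is that these two error terms, together with the increments $|h_t-h_{t-1}|\le 2L_1^2/Y_{t-1}^2$ and the conditional second moment $\E[(h_t-h_{t-1})^2\mid\G(n,t-1)]=O(L_1^2/(N_{t-1}Y_{t-1}^2))$ of the martingale part of $h_t$, are all controlled by the largest component alone, via $\sum_i s_i^4\le L_1^2Y_t$ and $\E[(\Delta Y_t)^2\mid\G(n,t-1)]=\tfrac{4}{N_{t-1}}\sum_{i<j}s_i^3s_j^3\le 2L_1^2Y_t^2/N_{t-1}$.

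Now insert the sharp subcritical bound on $L_1$: by Lemma~\ref{lem:tluc}(a), together with the classical estimate $L_1=O(\log n)$ in the strongly subcritical regime, a.a.s.\ $L_1(\G(n,t))=O(n^{2/3}\lambda_t^{-2}\log\lambda_t)$ uniformly over $t\le m$, where $\lambda_t:=(1-2t/n)n^{1/3}\ge\lambda:=\lambda_m$; put also $\delta_t:=1-2t/n=\lambda_tn^{-1/3}$ and $\delta:=\delta_m$. Conditioning on this event, run a bootstrap: let $\tau$ be the first $t$ at which $Y_t$ leaves a $(1\pm\eps)$-window around $n^2/(n-2t)$, so that for $t<\tau$ one has $Y_{t-1}\asymp n/\delta_{t-1}$. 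Converting $\sum_{t\le m}(\cdot)$ to $\tfrac n2\int_\delta^1(\cdot)\,d\delta'$ and substituting $u=\delta'n^{1/3}$ turns each error sum into an integral $\int_\lambda^{n^{1/3}}u^{-j}\log^2u\,du$ dominated by its lower endpoint $u\approx\lambda$; this yields a total drift of order $n^{-4/3}\lambda^{-2}\log^2\lambda$ for $h$ and a total conditional variance of order $n^{-8/3}\lambda^{-1}\log^2\lambda$ for its martingale part. A maximal Freedman-type inequality applied to the compensated, stopped martingale then gives $\Prob[\tau\le m]\le\exp(-c\,\eps^2\lambda^3/\log^2\lambda)$, which is $o(1)$ whenever $\lambda\to\infty$; choosing $\eps=\eps(n)\to0$ sufficiently slowly, we conclude that a.a.s.\ $h_m=h_0+o((n-2m)/n^2)=1/n+o((n-2m)/n^2)$, hence $Y_m^{-1}=(1+o(1))(n-2m)/n^2$ and $\chi(\G(n,m))=Y_m/n\sim n/(n-2m)=\tfrac{n/2}{n/2-m}$.

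I expect the main obstacle to be exactly the last paragraph: showing that the errors stay negligible all the way down to $n/2-m\gg n^{2/3}$. That this is the right threshold is no accident, since $\lambda^3=n(1-2m/n)^3$, so ``$\lambda\to\infty$'' \emph{is} the hypothesis. Near criticality the largest component, and with it the per-step fluctuation of $Y_t$, is polynomially large, and the argument survives only because, after integrating the error estimates against the deterministic trajectory, everything collapses onto the single parameter $\lambda^3$; making this rigorous needs the subcritical component-size bounds of Lemma~\ref{lem:tluc} applied uniformly over the whole process, plus a careful evaluation of the resulting integrals so that no power of $\log n$ is lost. (An alternative transfers to $\Gp(n,p)$ with $p=(1+o(1))\,2m/n^2$ via Lemma~\ref{lem:gnp_to_gnm} and a reverse coupling, uses the identity $\E[\chi(\Gp(n,p))]=\E|C(v)|$ for a fixed vertex $v$, estimates $\E|C(v)|$ by sandwiching $C(v)$ between the total progenies of Galton--Watson trees with $\bin(n-1,p)$ and $\bin(n-O(\cdot),p)$ offspring, and gets concentration from a second-moment computation; there the same threshold appears through the requirement $\E|C(v)|^3=o(n(\E|C(v)|)^2)$, i.e.\ $(1-2m/n)^3\gg1/n$.)
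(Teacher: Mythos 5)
The paper does not prove this lemma at all: it is imported verbatim as Theorem~1.1 of the cited Janson--{\L}uczak paper on susceptibility in subcritical random graphs, so there is no in-paper argument to compare against. Your proposal is a self-contained reconstruction along essentially the same lines as that reference: track $S_2=\sum_i s_i^2$ along the edge-by-edge process, use the exact merging identity to get the drift $\approx 2Y^2/n^2$, pass to the potential $Y^{-1}+2t/n^2$, and control the error and martingale terms through $L_1$ alone via $\sum_i s_i^4\le L_1^2 Y$ and $\sum_{i<j}s_i^3s_j^3\le \tfrac12 L_1^2Y^2$. Your bookkeeping checks out: with $Y\asymp n^{4/3}/\lambda$ and $L_1^2\asymp n^{4/3}\lambda^{-4}\log^2\lambda$ on the good event, the accumulated drift error is $O(n^{-4/3}\lambda^{-2}\log^2\lambda)=o(\lambda n^{-4/3})$, the accumulated predictable variance is $O(n^{-8/3}\lambda^{-1}\log^2\lambda)$, and the resulting tail $\exp(-c\eps^2\lambda^3/\log^2\lambda)$ vanishes exactly when $\lambda\to\infty$, i.e.\ when $n/2-m\gg n^{2/3}$; the constant-term cancellation $-1/N_{t-1}+2/n^2=O(n^{-3})$ is also harmless. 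Two points deserve explicit mention if you write this up: (i) the expansion of $Y_t^{-1}$ in powers of $\Delta Y_t/Y_{t-1}$ is legitimate only because $L_1^2/Y\asymp\lambda^{-3}\log^2\lambda\to 0$, which is again exactly the hypothesis, and (ii) you need the largest-component bound uniformly over all $t\le m$, which Lemma~\ref{lem:tluc}(a) does supply in the window $1\ll\lambda_t\ll n^{1/3}$ but must be supplemented by the classical $O(\log n)$ bound for the strongly subcritical early steps, as you note. What your route buys is independence from the external reference at the cost of carrying out the Freedman/stopping-time step in full; what the citation buys the paper is precisely that this delicate uniform concentration down to the $n^{2/3}$ window is already done.
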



\section{Concentration tools}\label{sec:concentration-tools}

Let us start this section with the following result which is a  generalization of a well-known Chernoff bound.

\begin{lemma}\label{lem:chernoff}
Let $\ccc = (c_1, c_2, \ldots, c_r)$ be a sequence of natural numbers with  $c = \max_{i} c_i$. 
Let $S_j = \sum_{i=1}^j c_i Z_i$, where $Z_i, i \in [r]$ are independent Bernoulli($p$) random variables. Let $\mu_j = \E [S_j]=  p\sum_{i=1}^j c_i$, and let $\mu=\mu_r=\E[S_r]$.
Then for $t \ge 0$ we have that
\begin{eqnarray*}
\Prob \left(\max_{1\leq j\leq r} (S_j-\mu_j) \ge t \right) &\le& \exp \left( - \frac {t^2}{2c(\mu+t/3)} \right) \mbox{ and }\\
\Prob \left(\max_{1\leq j\leq r} (\mu_j-S_j) \ge t \right) &\le& \exp \left( - \frac {t^2}{2c \mu} \right). \\
\end{eqnarray*}
In particular, for $\eps \le 3/2$ we have that
$$
\Prob \left( \max_{1\leq j\leq r} |S_j-\mu_j| \ge \eps \mu \right) \le 2 \exp \left( - \frac {\eps^2 \mu}{3c} \right).
$$
\end{lemma}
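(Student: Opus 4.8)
The plan is to combine the exponential (Bernstein-type) moment method with Doob's maximal inequality for submartingales, so that taking the maximum over $j$ costs us essentially nothing beyond the one-point Chernoff estimate.

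First I would fix $\theta>0$, work with the natural filtration $\mathcal{F}_j$ generated by $Z_1,\dots,Z_j$, and set $W_j=\exp(\theta(S_j-\mu_j))$ with $W_0=1$. Since $\E[\theta c_j(Z_j-p)\mid\mathcal{F}_{j-1}]=0$ and $x\mapsto e^{x}$ is convex, Jensen's inequality gives $\E[W_j\mid\mathcal{F}_{j-1}]\ge W_{j-1}$, so $(W_j)$ is a nonnegative submartingale. Doob's maximal inequality then yields $\Prob(\max_{1\le j\le r}W_j\ge a)\le\E[W_r]/a$ for every $a>0$; choosing $a=e^{\theta t}$ and using that $x\mapsto e^{\theta x}$ is increasing converts this into $\Prob(\max_{1\le j\le r}(S_j-\mu_j)\ge t)\le e^{-\theta t}\,\E[W_r]$.

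The next step is to bound the moment generating factor $\E[W_r]=\prod_{i=1}^{r}\E[\exp(\theta c_i(Z_i-p))]$. For each $i$ we have $\E[\exp(\theta c_i(Z_i-p))]=e^{-\theta c_i p}(1-p+p e^{\theta c_i})\le\exp(p(e^{\theta c_i}-1-\theta c_i))$ by $1+x\le e^{x}$. Then I would invoke the elementary inequality $e^{x}-1-x\le\tfrac{x^{2}/2}{1-x/3}$, valid for $0\le x<3$, with $x=\theta c_i\le\theta c$; this is where the hypothesis $c=\max_i c_i$ enters, both to control the denominator uniformly and, crucially, through $\sum_i c_i^{2}\le c\sum_i c_i=c\mu/p$, to obtain $\E[W_r]\le\exp\bigl(\tfrac{\theta^{2}c\mu}{2(1-\theta c/3)}\bigr)$ whenever $\theta c<3$. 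Substituting $\theta=t/(c(\mu+t/3))$ (which satisfies $\theta c<3$ since $t<3\mu+t$) and simplifying — one checks $1-\theta c/3=\mu/(\mu+t/3)$, so the exponent becomes $-\theta t+\tfrac{\theta^{2}c(\mu+t/3)}{2}=-\tfrac{t^{2}}{2c(\mu+t/3)}$ — gives exactly the stated upper-tail bound. The lower-tail bound is obtained in the same way with $\tilde W_j=\exp(-\theta(S_j-\mu_j))$, now using the cleaner inequality $e^{-x}-1+x\le x^{2}/2$ for $x\ge 0$, which imposes no restriction on $\theta$; this leads to $\Prob(\max_j(\mu_j-S_j)\ge t)\le\exp(-\theta t+\theta^{2}c\mu/2)$, optimized by $\theta=t/(c\mu)$. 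Finally, the two-sided statement for $\eps\le 3/2$ follows by taking $t=\eps\mu$ in both estimates: the upper-tail exponent is $\tfrac{\eps^{2}\mu}{2c(1+\eps/3)}\ge\tfrac{\eps^{2}\mu}{3c}$ because $1+\eps/3\le 3/2$, the lower-tail exponent is $\tfrac{\eps^{2}\mu}{2c}\ge\tfrac{\eps^{2}\mu}{3c}$, and a union bound supplies the factor $2$.

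I do not expect a serious obstacle. The only points that require care are (i) verifying the exponential submartingale property cleanly so that Doob's inequality applies with constant $1$, and (ii) the Bernstein bookkeeping — in particular confirming $\theta c<3$ for the chosen $\theta$ in the upper tail, and making sure the reduction $\sum_i c_i^{2}\le c\mu/p$ is used so that the exponent carries the factor $c$ rather than $\max_i c_i^{2}$.
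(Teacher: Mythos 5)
Your proof is correct, and it follows the same overall architecture that the paper relies on: the exponential moment method combined with a Doob-type maximal inequality for the exponential submartingale (the paper invokes exactly the bound $\Prob(\max_{1\le j\le r}(S_j-\mu_j)\ge t)\le e^{-ht}\,\E[e^{h(S_r-\mu_r)}]$ and then defers to McDiarmid's Theorem~2.3 for the rest). The one place you genuinely diverge is in how the moment generating function of the weighted sum is bounded. The paper's appendix handles the unequal weights by a convexity/rearrangement argument: it shows that the product $\prod_i\bigl(pe^{uc_i}+(1-p)\bigr)$ only increases when weight is shifted from a smaller $c_i$ to a larger $c_j$, so the extremal configuration is $(\sum_i c_i)/c$ trials each of weight $c$, after which the classical binomial computation with $\varphi(x)=(1+x)\log(1+x)-x$ and the inequalities $\varphi(x)\ge x^2/(2(1+x/3))$ and $\varphi(x)\ge x^2/2$ delivers the two tails. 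You instead bound each factor termwise by $\exp\bigl(p(e^{\theta c_i}-1-\theta c_i)\bigr)$, apply the Bernstein inequality $e^x-1-x\le\frac{x^2/2}{1-x/3}$ for $0\le x<3$, and collapse the sum via $\sum_i c_i^2\le c\sum_i c_i=c\mu/p$; the lower tail uses $e^{-x}-1+x\le x^2/2$. Both routes produce identical constants. Yours avoids the divisibility bookkeeping (the paper assumes $\sum c_i$ is divisible by $c$ and waves away the adjustment) and makes the role of $c$ in the exponent transparent; the paper's rearrangement has the minor virtue of identifying the genuinely worst-case weight sequence. Your optimization of $\theta$ and the final deduction of the two-sided $\eps$-bound check out exactly.
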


To prove this lemma, one can easily adjust the proof of the classic Chernoff bound. Alternatively, the same bounds come from~\cite{mcdiarmid1998}. In that paper, the counterpart of Lemma~\ref{lem:chernoff} is stated for $S_r-\mu$ (see Theorem~2.3); however, the author comments that $S_r-\mu$ can be replaced with $\max_{1\leq j\leq r} (S_j-\mu_j)$ (which is a slightly stronger version that we need here) as follows. A standard martingale bound shows that eg.\ for any $h>0$:
\[\Prob \left( \max_{1\leq j\leq r} (S_j-\mu_j) \ge t \right) \leq e^{-ht}\E \left[ e^{h(S_r-\mu_r} \right]. \]
Then plugging this into the appropriate place in the proof of Theorem 2.3 yields the desired bounds.

\subsection{A rich-get-richer process}

Understanding the following process will be crucial in our analysis. Let $x$ and $y$ be any natural numbers (typically $x=x(n)$, $y=y(n)$, and other values defined here are functions of $n$ and tend to infinity as $n\to \infty$). Let $\ccc = (c_1, c_2, \ldots, c_r)$ be a sequence of natural numbers and let $c = \max_i c_i$. Finally, for any $0 \le q \le r$, let $t_q = x+y + \sum_{i=1}^q c_i$. Clearly, $t_q$ is an increasing sequence with $t_0 = x+y$.


We define the \emph{$(\ccc,x,y)$-process} as follows. The process starts with $X({t_0}) = x$ and $Y({t_0})=y$. For any $1 \le q \le r$, with probability $p_q$ where
\begin{equation}\label{epqdef}
p_q := \frac {X(t_{q-1})}{X(t_{q-1})+Y(t_{q-1})},
\end{equation}
the two random variables are updated as follows:
\begin{eqnarray*}
X(t_q) &=& X(t_{q-1}) + c_q  \\
Y(t_q) &=& Y(t_{q-1});
\end{eqnarray*}
otherwise,
\begin{eqnarray*}
X(t_q) &=& X(t_{q-1}) \\
Y(t_q) &=& Y(t_{q-1}) + c_q.
\end{eqnarray*}
Note that for any $0 \le q \le r$ we have $X(t_q) + Y(t_q) = t_q$. 

\medskip

In expectation, the ratio $p_q$ does not change throughout the process. Indeed,
\begin{eqnarray*}
\E[ p_{q+1} ~~|~~ p_q ] &=& p_q \frac {X(t_{q-1})+c_q}{X(t_{q-1})+Y(t_{q-1})+c_q} + (1-p_q) \frac {X(t_{q-1})}{X(t_{q-1})+Y(t_{q-1})+c_q} \\
&=& \frac {p_q c_q + X(t_{q-1})}{X(t_{q-1})+Y(t_{q-1})+c_q}\\
&=& \frac {p_q (c_q + X(t_{q-1})+Y(t_{q-1}))}{X(t_{q-1})+Y(t_{q-1})+c_q} \qquad \qquad \mbox{by~(\ref{epqdef})} \\ 
&=& p_q.
\end{eqnarray*}
The following lemma shows that this ratio is concentrated around $p_1 = x / (x+y)$.

\begin{lemma}\label{lem:key}
Consider the $(\ccc,x,y)$-process for some sequence $\ccc$ and natural numbers $x, y$ such that $\sum_{i=1}^rc_i<\frac{1}{2}(x+y)$. Then, for any $w \ge 1$,
$$
\Prob \left( \frac {X(t_r)}{t_r} > \frac{x}{x+y} \left( 1 + \frac {1}{w}\right) \right) \le \Prob \left( X(t_r) > \frac{x}{x+y}t_r + \frac {x}{w} \right) \le \exp \left( - \frac {x}{12 c w^2} \right).
$$
\end{lemma}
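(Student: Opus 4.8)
The plan is as follows. First, the inequality $\Prob(X(t_r)/t_r > \frac{x}{x+y}(1+1/w)) \le \Prob(X(t_r) > \frac{x}{x+y}t_r + \frac{x}{w})$ holds for free because $t_r = x + y + \sum_{i=1}^r c_i \ge x+y$: indeed $\frac{x}{x+y}(1+1/w)\,t_r = \frac{x}{x+y}t_r + \frac{x\,t_r}{w(x+y)} \ge \frac{x}{x+y}t_r + \frac{x}{w}$, so the first event is contained in the second. It thus remains to show $\Prob(A) \le \exp(-x/(12cw^2))$, where $A := \{X(t_r) > \frac{x}{x+y}t_r + \frac{x}{w}\}$.

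The heart of the matter is that at step $q$ the probability that $X$ (rather than $Y$) gains $c_q$ is exactly $p_q = X(t_{q-1})/t_{q-1}$, the current fraction of the mass $t_{q-1}$ on the $X$ side. So as long as this fraction stays below a threshold only slightly above $p_1 = x/(x+y)$, each increment to $X$ is stochastically dominated by an independent Bernoulli draw, and the process can be compared to a sum of independent weighted Bernoullis, to which Lemma~\ref{lem:chernoff} applies. Concretely, I would set $\theta := \frac{x}{x+y}(1 + \frac{2}{3w})$, so that $\theta(x+y) - x = \frac{2x}{3w}$ (we may assume $\theta \le 1$, since otherwise $X(t_r)/t_r \le 1 < \theta$ forces $A = \emptyset$). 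Realize the $(\ccc,x,y)$-process using i.i.d.\ uniform $[0,1]$ variables $U_1, \ldots, U_r$, with step $q$ a ``win'' for $X$ precisely when $U_q < p_q$; put $\widetilde B_q := \mathbf{1}\{U_q < \theta\}$ (i.i.d.\ Bernoulli$(\theta)$), and let $\tau$ be the first $q$ with $X(t_q)/t_q > \theta$ (and $\tau = r+1$ if there is none). For every $q \le \tau$ one has $p_q = X(t_{q-1})/t_{q-1} \le \theta$, so the win-indicator at step $q$ is at most $\widetilde B_q$, and hence $X(t_q) \le x + \widetilde S_q$, where $\widetilde S_j := \sum_{i=1}^j c_i \widetilde B_i$ has mean $\mu_j := \theta\sum_{i=1}^j c_i = \theta(t_j - x - y)$.

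To conclude, the hypothesis $\sum_{i=1}^r c_i < \frac12(x+y)$ gives $t_r < \frac32(x+y)$, so on $A$ one has $X(t_r)/t_r > \frac{x}{x+y} + \frac{x}{w t_r} > \frac{x}{x+y}(1 + \frac{2}{3w}) = \theta$; hence $\tau \le r$ on $A$, and combining $X(t_\tau)/t_\tau > \theta$ with $X(t_\tau) \le x + \widetilde S_\tau$ yields $\widetilde S_\tau - \mu_\tau > \theta t_\tau - x - \mu_\tau = \theta(x+y) - x = \frac{2x}{3w}$, so in particular $\max_{1\le j\le r}(\widetilde S_j - \mu_j) > \frac{2x}{3w}$. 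Now apply the first bound of Lemma~\ref{lem:chernoff} to the sequence $\ccc$ with success probability $\theta$ and $t := \frac{2x}{3w}$: using $\mu_r < \theta\cdot\frac12(x+y) = \frac{x}{2}(1 + \frac{2}{3w}) \le \frac{5x}{6}$ and $t/3 \le \frac{2x}{9}$ (both because $w \ge 1$), one gets $\Prob(A) \le \exp(-\frac{t^2}{2c(\mu_r + t/3)}) \le \exp(-\frac{4x}{19 c w^2}) \le \exp(-\frac{x}{12 c w^2})$, which is what we wanted.

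The step I expect to be the main obstacle is the reduction to independent increments: since $p_q$ depends on the past, there is no coupling to i.i.d.\ Bernoullis valid for the whole run, only up to the stopping time $\tau$, and the subtle point is that a deviation of $X$ occurring at a \emph{small} random time $\tau$ is a large fluctuation relative to the handful of coins flipped by then. This is precisely why it is essential that Lemma~\ref{lem:chernoff} bounds the running maximum $\max_{1\le j\le r}(\widetilde S_j - \mu_j)$ and not merely the final value $\widetilde S_r - \mu_r$. Everything else — the precise constant $2/3$ in the definition of $\theta$, and the arithmetic with $5/3$, $19/18$, and so on — is routine and is comfortably absorbed by the (very non-tight) factor $12$ in the exponent.
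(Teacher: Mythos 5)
Your proof is correct and follows essentially the same route as the paper's: bound the win probability $p_q$ by a fixed $\theta$ slightly above $x/(x+y)$ up to the stopping time at which the process first exceeds its target, couple the increments with i.i.d.\ Bernoulli variables, and invoke the running-maximum form of Lemma~\ref{lem:chernoff} (the paper thresholds $X(t_q)$ itself rather than the ratio $X(t_q)/t_q$ and uses $p=\frac{x}{x+y}(1+\frac1w)$ with deviation $\frac{x}{2w}$, but this is only a cosmetic difference in constants). You also correctly identify the one genuinely delicate point — that the deviation may occur at a small random time, which is exactly why the maximal inequality, not the endpoint bound, is needed.
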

\begin{proof} 
Note that if $X(t_{q-1}) \leq \frac{x}{x+y}t_{q-1} + \frac {x}{w}$  then 
\begin{eqnarray}
p_q &=& \frac {X(t_{q-1})}{X(t_{q-1})+Y(t_{q-1})} \nonumber \\
&\le& \frac {x (t_{q-1}/t_0+1/w)}{t_{q-1}} = \frac{x}{t_0} \left( 1 + \frac {t_0/t_{q-1}}{w} \right) \le \frac{x}{t_0} \left( 1 + \frac {1}{w} \right) =: p.\label{epbound}
\end{eqnarray}
We define $Z_1,\ldots,Z_r$ to be independent Bernoulli$(p)$ variables. Let $X'(t_0)=X(t_0)=x$. For each $1\leq q\leq r$ we define $X'(t_q)=X(t_q)$ if $X(t_{q'}) \leq \frac{x}{x+y}t_{q'} + \frac {x}{w}$ for every $q'<q$ and otherwise $X'(t_q)=X'(t_{q-1})+c_q Z_q$. Thus,  defining $S_q=\sum_{i=1}^q c_i Z_i$, (\ref{epbound}) implies that we can couple the process with $S_q$ so that $X'(t_q)\leq x+S_q$  for all $0\leq q\leq r$.

So we can apply Lemma~\ref{lem:chernoff} to bound the probability that $X'$ ever deviates much from its mean. Noting that $\mu_{r} =  p(t_{r}-t_0) \le pt_0/2 = (x/2) (1+1/w) \le x$ since $w\geq 1$, and setting $t = x/(2w) \le x/2$ we get that
\begin{eqnarray}
\nonumber\Prob \left( \max_{1\leq q\leq r} (S_q-\mu_q) >  \frac {x}{2w} \right) &\le& \exp \left( - \frac {t^2}{2c(\mu_{r}+t/3)} \right) \\
&\le& \exp \left( - \frac {(x/(2w))^2}{2c(x+(x/2)/3)} \right) =  \exp \left( - \frac {x}{12cw^2} \right).
\label{esq}
\end{eqnarray}
This implies the lemma since if $Q$ is the smallest $q\leq r$ for which $X(t_{q}) > \frac{x}{x+y}t_{q} + \frac {x}{w}$ then $X'_q=X_q$ for all $q\leq Q$ by definition and so 
\[S_Q\geq X'(t_Q)-x>  \frac {x}{T}(t_Q-T) + \frac {x}{w}=p(t_Q-T) + \frac {x}{w} \cdot \frac {2 T - t_Q}{T}
> p(t_Q-T) + \frac {x}{2w},\]
since $t_Q\leq t_r\leq\frac{3}{2}T$.
\end{proof}

We finish this subsection with the following result.

\begin{lemma}\label{lem:cxy_process}
Suppose that $c,x \ll y$. Then, at the end of the $(\ccc,x,y)$-process described above,  with probability at least $1-\exp (- x/(20c) )$,
$$
X{(t_r)} = O(xt_r/y).
$$
\end{lemma}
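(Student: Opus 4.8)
The plan is to uncover the martingale hidden in the $(\ccc,x,y)$-process and to control its entire trajectory with a single Bernstein-type (Freedman) inequality. One should resist the temptation to iterate Lemma~\ref{lem:key} over the roughly $\log_2(t_r/(x+y))$ doubling scales between $t_0=x+y$ and $t_r$: a union bound over that many scales costs an additive $\Theta(\log\log n)$ in the exponent, which is fatal against the target $\exp(-x/(20c))$ in the regime where $x/c$ is bounded. So I want an estimate that is ``maximal'' over the whole process from the outset.

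The martingale is $N_q:=X(t_q)/t_q$. Since $\E[X(t_q)\mid\mathcal F_{q-1}] = X(t_{q-1}) + c_q p_q = X(t_{q-1})(1+c_q/t_{q-1}) = X(t_{q-1})\,t_q/t_{q-1}$ (using $p_q = X(t_{q-1})/t_{q-1}$), we get $\E[N_q\mid\mathcal F_{q-1}] = N_{q-1}$; this is the quantitative form of the ``$p_q$ is preserved in expectation'' computation made just before Lemma~\ref{lem:key}. I would fix the constant $C:=2$, let $q^*$ be the first index $q$ with $X(t_q) > 2x t_q/y$ (set $q^*=r+1$ if there is none; note $q^*\ge 1$ because $X(t_0)=x < 2x t_0/y$), and work with the stopped martingale $\hat N_q := N_{q\wedge q^*}$, which is again a martingale.

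The point of stopping at $q^*$ is that for every step $q\le q^*$ we have $p_q = N_{q-1}\le 2x/y$, and this controls both the jumps and the variance of $\hat N$. Writing the increment uniformly as $N_q - N_{q-1} = -N_{q-1}(c_q/t_q) + (c_q/t_q)B_q$ with $B_q\sim\mathrm{Bern}(p_q)$, we get $|\hat N_q-\hat N_{q-1}|\le c_q/t_q\le c/(x+y)\le c/y=:b$, while the conditional variance of the $q$-th increment of $\hat N$ is at most $(c_q/t_q)^2 p_q\le (2x/y)(c_q/t_q)^2$. Summing, and using the telescoping bound $\sum_{q=1}^{r} c_q/t_q^{2}\le \sum_{q=1}^{r}(1/t_{q-1}-1/t_q) = 1/t_0-1/t_r\le 1/y$, the accumulated conditional variance is at most $V:=2xc/y^{2}$. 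The whole point is that $V$ is \emph{small}, of order $xc/y^2$: the cruder bound $X(t_q)\le x+\sum_{i\le q}c_iZ_i$ with i.i.d.\ $Z_i\sim\mathrm{Bern}(2x/y)$ throws away the negative drift in $N_q$ and only sees a ``variance proxy'' of order $(x/y)\log(t_r/t_0)$, which is far too large; it is precisely the drift cancellation inside $N_q$ that replaces the divergent sum $\sum c_q/t_q$ by the convergent sum $\sum c_q/t_q^{2}\le 1/t_0$.

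Finally I would apply Freedman's inequality (the Bernstein bound for martingales with increments bounded by $b$ and accumulated conditional variance bounded by $V$ — the martingale strengthening that also underlies Lemma~\ref{lem:chernoff}; see the remark following it) to $\hat N_q-N_0$ with $\lambda:=x/y$. Because $N_0=x/(x+y)<x/y$ and $N_{q^*}>2x/y$ on $\{q^*\le r\}$, while $2x/y\ge x/(x+y)+x/y=N_0+\lambda$, we have $\{X(t_r)>2x t_r/y\}\subseteq\{q^*\le r\}\subseteq\{\max_q(\hat N_q-N_0)\ge\lambda\}$, so
$$
\Prob\big(X(t_r)>2x t_r/y\big)\ \le\ \exp\!\left(-\frac{\lambda^{2}}{2(V+b\lambda/3)}\right)\ =\ \exp\!\left(-\frac{x^{2}/y^{2}}{2\big(2xc/y^{2}+xc/(3y^{2})\big)}\right)\ =\ \exp\!\left(-\frac{3x}{14c}\right)\ \le\ \exp\!\left(-\frac{x}{20c}\right).
$$
Hence with probability at least $1-\exp(-x/(20c))$ one has $X(t_r)\le 2x t_r/y = O(x t_r/y)$, which is the lemma (with implicit constant $2$). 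The hypotheses $c,x\ll y$ enter only to ensure $t_0=x+y\asymp y$ and $2x/y<1$ (so the barrier is non-trivial), plus the elementary arithmetic at the end; the genuine content is the choice of martingale and the small-variance estimate, and I expect that identifying $N_q=X(t_q)/t_q$ rather than $X(t_q)$ itself, and noticing $\sum c_q/t_q^2\le 1/t_0$, is the only real obstacle.
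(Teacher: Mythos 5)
Your proof is correct, and it takes a genuinely different route from the paper's. The paper splits the process into phases during which the total size grows by a factor of at most $3/2$, applies Lemma~\ref{lem:key} to each phase with slack parameter $w_j=1.1^j$, and sums the per-phase failure probabilities; these decay geometrically because the starting value $x_j$ of phase $j$ grows like $1.4^{j}$ while $w_j^2$ grows only like $1.21^{j}$, so your opening worry --- that iterating Lemma~\ref{lem:key} over the doubling scales must cost an additive $\Theta(\log\log n)$ in the exponent --- does not apply to the paper's actual argument (the paper also reduces to $c\ll x$ so that the factor $2$ from summing the series is absorbed in passing from $17$ to $20$ in the exponent). Your argument instead identifies $N_q=X(t_q)/t_q$ as an exact martingale (indeed $p_q=N_{q-1}$, so the increment is $\tfrac{c_q}{t_q}(B_q-p_q)$), stops it at the barrier $2x/y$, and controls the entire trajectory with a single application of Freedman's inequality; the decisive observation is the telescoping bound $\sum_q c_q/t_q^2\le 1/t_0$, which makes the accumulated conditional variance $O(xc/y^2)$ and delivers the exponent $3x/(14c)$ in one stroke. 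All steps check out: the stopped increments are bounded by $c/t_0\le c/y$, the conditional second moments by $(2x/y)(c_q/t_q)^2$, and the event inclusion $\{X(t_r)>2xt_r/y\}\subseteq\{\max_q(\hat N_q-N_0)\ge x/y\}$ is valid. What your route buys is a shorter proof with an explicit constant ($X(t_r)\le 2xt_r/y$), no phase decomposition, and no need for Lemma~\ref{lem:key} or its hypothesis $\sum_i c_i<(x+y)/2$; the cost is that you invoke the maximal Bernstein--Freedman bound for martingales as a black box, which is standard but not among the tools the paper proves (Lemma~\ref{lem:chernoff} is stated for i.i.d.\ Bernoulli summands, though the remark following it sketches the martingale extension), so a careful write-up should supply a precise reference or a short derivation of that inequality.
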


Before we move to the proof of the lemma, note that we can assume that $c\ll x$ as otherwise we can replace $x$ by some $x'>x$ with $c\ll x'\ll y$.  Clearly, running $(\ccc,x',y)$ rather than $(\ccc,x,y)$ only decreases the probability that $X\leq Z$ for any given $Z$; in particular, it decreases the probability that $X(t_r)=O(xt_r/y)$. So if the lemma holds for $x'$ then it holds for $x$.

\begin{proof}
Lemma~\ref{lem:key} requires that $\sum_{i=1}^r c_i\leq (x+y)/2$.  In order to apply this lemma, we split the process into phases. To simplify the notation, set $r_0=0$. For the first phase we take the longest sub-sequence $\ccc_1 = (c_1, c_2, \ldots, c_{r_1})$ such that $\sum_{i=1}^{r_1} c_i \le (x+y)/2$. Next, we pick the longest subsequence $\ccc_2 = (c_{r_1+1}, c_{r_1+2}, \ldots, c_{r_2})$ such that $ \sum_{i={r_1+1}}^{r_2} c_i \le \frac12(x+y+\sum_{i=1}^{r_1} c_i)$, and so on, for each $j$ picking the longest subsequence $\ccc_j = (c_{r_{j-1}+1}, c_{r_{j-1}+2}, \ldots, c_{r_{j}})$ such that 
\begin{equation}\label{eCjsum}
\sum_{i={r_{j-1}+1}}^{r_j} c_i \le \frac12\left(x+y+\sum_{i=1}^{r_{j-1}} c_i\right).
\end{equation} 
The last phase, phase $\ell$, deals with the sequence $\ccc_\ell = (c_{r_{\ell-1}+1}, c_{r_{\ell-1}+2}, \ldots, c_{r_\ell}=c_r)$. Now the $(\ccc,x,y)$-process can be treated as a series of $\ell$ processes, each on the sequence $\ccc_j$ and with initial values taken from the end of the previous sequence; i.e.\ phase $j$ is the $(\ccc_j, X(t_{r_{j-1}}), Y(t_{r_{j-1}}))$-process, where $X(t_0):= x, Y(t_0):= y$.  

Recall that for every $q$, $X(t_q)+Y(t_q)=x+y+\sum_{i=1}^q c_i$. So~(\ref{eCjsum}) implies that we can apply Lemma~\ref{lem:key} to each phase.  For any $j \in \nat$, let $w_j = 1.1^{j}$, and for any $j \in \nat \cup \{0\}$, let
$$
{E_j = \prod_{i=1}^{j} \left( 1 + \frac {1}{w_i} \right) = \Theta \left( 1 \right),} 
$$
since 
$$
{1 \le \prod_{i=1}^{j} \left( 1 + \frac {1}{w_i} \right) \le \exp \left( \sum_{i=1}^{\infty} 1/w_i \right) = e^{10}. }
$$
(In particular, $E_0 = 1$.) We will prove that with probability at least $1-\exp (- x/(20c) )$, at the end of every phase $j$ we have:
\begin{equation}\label{exej}
{ X(t_{r_j})\leq \frac {x}{x+y} E_{j} t_{r_j} }.
\end{equation} 
If~(\ref{exej}) holds at the end of phase $j-1$ (or if $j=1$; note that~(\ref{exej}) trivially holds for $j=0$), then the probability that~(\ref{exej}) holds at the end of phase $j$ is at least the probability that it holds if we adjust the initial values to
\[x_j:= \frac {x}{x+y} E_{j-1} t_{r_{j-1}}; \qquad y_j:= t_{r_{j-1}} -x_j.\]
So we get a lower bound on the probability of~(\ref{exej}) by applying Lemma~\ref{lem:key} to  the $(\ccc_j, x_j, y_j)$-process. 

We require two bounds. For the first one, recall that by definition $t_q = x+y + \sum_{i=1}^q c_i$. Hence, $t_{r_0}=t_0=x+y$ and, since we choose the longest $\ccc_j$ satisfying~(\ref{eCjsum}) and each $c_i\leq c\ll x+y\leq  t_{r_{j-1}}$, for each $1\leq j<\ell$ we have 
\[t_{r_{j}} = t_{r_{j-1}} + \sum_{i={r_{j-1}+1}}^{r_j} c_i> 1.5 t_{r_{j-1}} - c = (1.5-o(1)) t_{r_{j-1}} > 1.4 t_{r_{j-1}}> 1.4^j (x+y).\]
Therefore,
\[ 
x_j > x(1.4)^{j-1}E_{j-1} \ge x(1.4)^{j-1} \qquad \qquad \qquad \text{ for any } 1 \le j \le \ell
\]
and, since $c\ll x$, 
\[\frac{x}{17c}\left((1.1)^j-1\right)>j.
\]
Note that, since $x_j+y_j=t_{r_{j-1}}$,
\[\frac{x_j}{x_j+y_j}\left(1+\frac{1}{w_j}\right) t_{r_j}
=\frac {x}{x+y} E_{j-1} t_{r_{j-1}}\times\frac{1}{t_{r_{j-1}}}\times\left(1+\frac{1}{w_j}\right) t_{r_j}
=\frac {x}{x+y} E_{j} t_{r_j}.
\]
So Lemma~\ref{lem:key} (applied with $x=x_j$, $y=y_j$, and $t_r=t_{r_j}$; $c$ remains the same for all applications of the lemma) yields that the probability that~(\ref{exej}) fails to hold at the end of phase $j$ is at most 
$$
\exp \left( - \frac {x_{j}}{12 c w_j^2} \right) \le \exp \left( - \frac {x 1.4^{j-1}}{12 c 1.1^{2j}} \right) \le \exp \left( - \frac {x 1.1^{j}}{17 c} \right) \le 2^{-j} \exp (- x/(17c)).
$$
Hence, since $\sum_{j \ge 0} 2^{-j} \exp (- x/(17c)) \le 2 \exp (- x/(17c) ) \le \exp (- x/(20c) ) \to 0$, with the desired probability~(\ref{exej}) holds at the end of every phase. Since the last phase ends at $r_{\ell}=r$ and $E_{\ell}=\Theta(1)$, this implies
\[
 X(t_r) = X(t_{r_{\ell}})\leq \frac {x}{x+y} E_{\ell} t_{r_{\ell}} = \Theta\left(\frac{x t_r}{x+y}\right) =O\left(\frac{xt_r}{y}\right),
\]
as $x\ll y$.
\end{proof}

\section{The giant has enough time to be born: $k \ll n^{1/3}$}\label{sec:k-small}

Suppose that $k \ll n^{1/3}$.  As mentioned earlier, we will prove that for this range of the parameter $k$, the giant component is formed before collisions start affecting the process. In particular, when the first special vertex joins the giant its size is much larger than the total size of all other special components---see Lemma~\ref{lem:prop_m2}. As a result, the giant will continue growing and at the end of the process it will have size $n-o(n)$---see Theorem~\ref{thm:main}(a).

\subsection{Early phase}

Let $\omega=\omega(n)$ be any function that grows with $n$ sufficiently slowly to satisfy various bounds that follow. In particular, it will grow more slowly than $n^{1/3}/k$; {we may then assume that $\omega^2 \le n^{1/3}/k$. 

It will be also convenient to assume that $k = k(n)$ tends to infinity faster than $\omega$ so let us assume for now that $k \ge \omega^2$; we will discuss how to translate the results to other values of $k$ (including the case when $k$ is a constant) at the end of this section.} Define:
\begin{eqnarray*}
\lambda_1 &=& \lambda_1(n) = n^{1/3}/(k \omega) \ll n^{1/3}/k \\
 m_1&=& (n/2) (1+ \lambda_1 n^{-1/3}).
 \end{eqnarray*}
So $m_1\sim n/2$ and  $\G(n,m_1)$ is in the supercritical phase (note that $\omega$ tends to infinity slowly enough so that $\lambda_1$ tends to infinity).

\medskip

Recall that for a given graph $G$ with $r$ connected components, $L_i(G)$ is the size of an $i$-th largest component ($i=1,2, \ldots, r$). Similarly, let $\hat{L}_i(G)$ be the size of an $i$-th largest special component ($i=1,2,\ldots, k$). 

\medskip

Let us start with the following observation.

\begin{lemma}\label{lem:prop_m1}
Suppose that $\omega^2 \le k \le n^{1/3}/\omega^2$ for some $\omega=\omega(n) \to \infty$ as $n \to \infty$. Let $\lambda_1$ and $m_1$ be defined as above. Then a.a.s.\ the following properties hold.
\begin{itemize}
\item [(a)] $L_1(\Gk(n,m_1)) \sim 2n / (k \omega) = o(n)$;
\item [(b)] $L_2(\Gk(n,m_1)) = \Theta(k^2 \omega^2 \log (n^{1/3}/k))$;
\item [(c)] $\hat{L}_1(\Gk(n,m_1)) = O(k^2 \omega^2 \log (n^{1/3}/k))$.
\end{itemize}
\end{lemma}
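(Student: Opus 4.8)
The plan is to analyze $\Gk(n,m_1)$ by coupling it with the ordinary random graph process $\G(n,m_1)$ and tracking how collisions perturb the component structure. Since $\lambda_1 = n^{1/3}/(k\omega) \to \infty$, Lemma~\ref{lem:tluc}(b) tells us that a.a.s.\ in $\G(n,m_1)$ there is a unique giant component of size $(2+o(1))\lambda_1 n^{2/3} = (2+o(1)) n/(k\omega)$, and the second largest component has size $\Theta(n^{2/3}\lambda_1^{-2}\log\lambda_1) = \Theta(k^2\omega^2\log(n^{1/3}/k))$ (using $\log\lambda_1 = \Theta(\log(n^{1/3}/k))$ since $\omega$ grows slowly). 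The key point I would establish is that up to time $m_1$, no collision has occurred in $\Gk(n,m_1)$, or at worst, the collisions that occurred are negligible. The heuristic is that a collision at step $i$ requires the two endpoints of $e_i$ to lie in two distinct components each already containing a special vertex; before the giant is large, the special vertices are spread among tiny components of total size $O(k \cdot L_2) = O(k^3\omega^2\log(n^{1/3}/k)) \ll n^{2/3}$, which is $o(n^{2/3}\lambda_1) = o(L_1)$ — in fact far smaller. So until a special vertex enters the giant, the probability that any given new edge is a collision is tiny.

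The main steps I would carry out are as follows. First, condition on the edge sequence $e_1,\dots,e_{m_1}$ and, via Remark~\ref{rcouple}, choose the $k$ special vertices uniformly afterward; equivalently, fix the (a.a.s.) good component structure of $\G(n,m_1)$ from Lemma~\ref{lem:tluc} and then sprinkle the special vertices. Second, bound the expected number of collisions before $m_1$: a collision at step $i$ is only possible between two non-giant special components, and since at every intermediate time $m' \le m_1$ the total size of all non-giant components is $O(n^{2/3}\lambda_1^{-1})$ by subcriticality-type estimates (or just bounded by $n$ trivially, refined near the critical window), the chance that $e_i$'s two endpoints fall into two \emph{distinct} special non-giant components, summed over $i \le m_1$, is small — here one uses that a uniformly random set of $k$ special vertices is very unlikely to populate two small components simultaneously with an edge landing between them. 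Third, once we know a.a.s.\ no collision has occurred, $\Gk(n,m_1) = \G(n,m_1)$ exactly, so (a) and (b) follow directly from Lemma~\ref{lem:tluc}(b)(i) and (ii) respectively. For (c), note $\hat{L}_1 \le L_1$ trivially, but that gives $O(n/(k\omega))$, not the claimed bound; instead I would argue that a.a.s.\ no special vertex lies in the giant component of $\G(n,m_1)$: the giant has size $(2+o(1))n/(k\omega)$, so the probability a fixed special vertex is in it is $(2+o(1))/(k\omega)$, and over $k$ special vertices the expected number in the giant is $(2+o(1))/\omega = o(1)$. Hence a.a.s.\ every special component is a non-giant component, so $\hat{L}_1 \le L_2 = O(k^2\omega^2\log(n^{1/3}/k))$ by part (b).

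The main obstacle I anticipate is making the ``no collisions before $m_1$'' claim fully rigorous, because it requires controlling the component structure of $\G(n,m')$ \emph{throughout} the interval $m' \in [n/2 - O(n^{2/3}), m_1]$, not just at the single time $m_1$ — in particular one needs a uniform bound on the total mass of small components as the giant emerges during the critical and supercritical phases. One clean way around this is to only insist that no collision occurs after the giant is well-defined (say after time $n/2 + n^{2/3}$), handle the short critical window $[n/2 - n^{2/3}, n/2 + n^{2/3}]$ separately using that all components there have size $O(n^{2/3})$ so the probability a special vertex is in a component of size $\geq 2$ at all is small and hence collisions there are rare, and before $n/2 - n^{2/3}$ use Lemma~\ref{lem:susceptibility} (susceptibility $\chi = O(1)$ for $m' \ll n/2 - n^{2/3}$) together with the observation that a collision at step $i$ is, conditional on the component structure, an event of probability at most (roughly) $\binom{k}{2} \cdot (\text{max component size}/n)^2$-type quantity summed over steps, which is $o(1)$ when $k \ll n^{1/3}$. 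Assembling these three ranges into one union bound is the bookkeeping-heavy part; once done, the structural conclusions (a)--(c) are immediate from Lemma~\ref{lem:tluc}.
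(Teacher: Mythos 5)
Your computation for part (c) --- sprinkle the $k$ special vertices after exposing $\G(n,m_1)$ via Remark~\ref{rcouple}, note that the expected number landing in the giant is $k\cdot\frac{(2+o(1))n/(k\omega)}{n}=(2+o(1))/\omega=o(1)$, and conclude that a.a.s.\ no special vertex lies in the giant --- is exactly the paper's argument, and it is in fact all you need for (a) and (b) as well. Under the coupling, the components of $\Gk(n,m)$ refine those of $\G(n,m)$, and an edge is skipped only when both endpoints lie, at that moment, in components containing special vertices; consequently any component of $\G(n,m_1)$ containing at most one special vertex is preserved intact as a component of $\Gk(n,m_1)$. Since the same $o(1)$ expectation computation shows that a.a.s.\ neither the largest nor the second-largest component of $\G(n,m_1)$ contains a special vertex, both survive unchanged in $\Gk(n,m_1)$, which gives (a) and (b) immediately. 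This is precisely how the paper proceeds.

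The detour you take instead for (a) and (b) --- proving that a.a.s.\ \emph{no collision whatsoever} occurs before $m_1$ --- is a strictly stronger statement than needed, and your sketch of it has genuine gaps. Lemma~\ref{lem:susceptibility} gives $\chi(\G(n,m'))\sim\frac{n/2}{n/2-m'}$, which is $O(1)$ only when $n/2-m'=\Theta(n)$; as $m'$ approaches the critical window the susceptibility grows to order $n^{1/3}$, so the claim ``$\chi=O(1)$ for $m'\ll n/2-n^{2/3}$'' is false, and the per-step collision probability has to be integrated over the subcritical range rather than bounded uniformly. In the critical window it is likewise not true that a special vertex is unlikely to sit in a component of size at least $2$: already at $m=n/2$ a constant fraction of all vertices lie in nontrivial components, so that part of the argument fails as stated; what actually controls collisions there is again a bound on the probability that a single edge joins two distinct special components (of order $(k\chi/n)^2$ per step) summed over the $O(n^{2/3})$ steps of the window. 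The no-collision statement is true for this range of $k$ and could be pushed through along these corrected lines, but it is unnecessary: the single-time preservation argument above replaces it entirely.
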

\begin{proof}
The proof follows easily {from Lemma~\ref{lem:tluc}} applied to $\G(n,m_1)$. We get that a.a.s.\ the complex component of $\G(n,m_1)$ has size asymptotic to $2 \lambda_1 n^{2/3} = 2n / (k \omega) = o(n)$. Moreover, a.a.s.\ the size of the second largest component is of order $n^{2/3} \lambda_1^{-2} \log \lambda_1 = \Theta(k^2 \omega^2 \log (n^{1/3}/k))$. Since we aim for a statement that holds a.a.s.\ we may assume that $\G(n,m_1)$ has these properties. Now, we select $k$ special vertices at random to translate these observations to $\Gk(n,m_1)$, as described in Remark~\ref{rcouple} in Section~\ref{sec:more_definitions}. The expected number of special vertices that belong to the complex component is {asymptotic to} $k\cdot \frac{2n}{k \omega} \cdot\frac{1}{n}=o(1)$, so a.a.s.\ no special vertex belongs there. This implies (a) and (c). The same argument shows that a.a.s.\ no special vertex belongs to the second largest component which implies (b). The proof of the lemma is finished.
\end{proof}

So we can assume that $\Gk(n,m_1)$ satisfies the properties of Lemma~\ref{lem:prop_m1}. In particular, the largest component of $\Gk(n,m_1)$ contains no special vertices and so it is identical to the largest component of $\G(n,m_1)$ under the coupling described in Section~\ref{sec:concentration-tools}.  Since $\Gk(n,{n\choose 2})$ has exactly $k$ components, each with one special vertex, there will be a step of the $k$-process when the largest component is joined to a component containing a special vertex; we define this step as:
\begin{align*}
m_2 & \mbox{ is the first step following $m_1$} \\
& \mbox{ in which the largest component of $\G(n,m_2)$ contains a special vertex.}
\end{align*}


It is worth noting that a.a.s.\ $m_2\sim n/2$; we only sketch the straightforward proof.
One can show easily, using Remark~\ref{rcouple}, that a.a.s.\ the largest component of $\G(n,m=(n/2) (1+ \lambda_2 n^{-1/3}))$ contains at least one special vertex if
$$
\lambda_2 = \lambda_2(n) = n^{1/3} \omega/k \gg n^{1/3}/k.
$$
Therefore, {a.a.s.}\ $m_2 \le (n/2) (1+ \lambda_2 n^{-1/3})$. Hence, {a.a.s.}\ $m_2 \sim n/2$ (since it is assumed that $k \ge \omega^2$).

\begin{lemma}\label{lem:prop_m2}
Suppose that {$\omega^2 \le k \le n^{1/3}/\omega^2$ for some $\omega=\omega(n) \to \infty$ as $n \to \infty$}. Let $m_2$ be defined as above. Then, a.a.s.\ the following properties hold
\begin{itemize}
\item [(a)] $\hat{L}_1(\Gk(n,m_2)) = L_1(\Gk(n,m_2)) \ge n / (k \omega)$;
\item [(b)] $\sum_{i=2}^k \hat{L}_i(\Gk(n,m_2)) \le k^2\omega^3\log(n^{1/3}/k)$.
\end{itemize}
Moreover, for every $m \ge m_2$:
\begin{itemize}
\item [(c)] the size of any non-special component in $\Gk(n,m)$ is at most $ k^2 \omega^3 \log (n^{1/3}/k)$.
\end{itemize}
\end{lemma}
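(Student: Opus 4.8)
The plan is to base everything on a structural comparison of the coupled processes $\G(n,m)$ and $\Gk(n,m)$ (built on the same edge sequence). First I would record two elementary facts. Since $\Gk(n,m)$ is obtained from $\G(n,m)$ by omitting only edges that would merge two components each containing a special vertex, every component of $\Gk(n,m)$ lies inside a component of $\G(n,m)$; and if a component $C$ of $\G(n,m)$ is split into two or more pieces by $\Gk(n,m)$, then \emph{each} of those pieces contains a special vertex (an edge of $C$ joining a special-free piece to anything else is never a collision, so it survives in $\Gk$). In particular, every special-free component of $\G(n,m)$ --- and so, by Lemma~\ref{lem:prop_m1} and the definition of $m_2$, the giant of $\G(n,m)$ for every $m_1\le m<m_2$ --- is a single component of $\Gk(n,m)$ with the same vertex set.

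For (a): just before step $m_2$ the giant of $\G$ is special-free, hence equals the giant $H$ of $\Gk$; as $\Gk$-components only grow, $H$ contains the component that was the giant of $\Gk(n,m_1)$, so by Lemma~\ref{lem:prop_m1}(a) we have $|H|\sim 2n/(k\omega)$. The edge $e_{m_2}$ joins $H$ to a $\G$-component $D$ containing a special vertex; its endpoint in $D$ lies in a $\Gk$-piece $D'$ of $D$, and $D'$ is special (either $D'=D$, or $D$ is split and then all its pieces are special by the fact above). Adding $e_{m_2}$ is not a collision, so $H\cup D'$ is a special component of $\Gk(n,m_2)$ of size $\ge 2n/(k\omega)(1-o(1))\ge n/(k\omega)$. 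Every other component of $\Gk(n,m_2)$ lies inside a non-giant $\G$-component or inside $D$, hence has size at most $L_2(\G(n,m_2))$; since a.a.s.\ $m_2$ is supercritical with parameter $\lambda(m_2)\in[\lambda_1,\lambda_2]$, Lemma~\ref{lem:tluc} gives $L_2(\G(n,m_2))=O(k^2\omega^2\log(n^{1/3}/k))$, which is $o(n/(k\omega))$ as $k\le n^{1/3}/\omega^2$ and $\omega\to\infty$. Thus $H\cup D'$ is the unique largest component, which is (a). Part (c) is then a quick corollary: for every $m\ge m_2$ the largest component of $\G(n,m)$ still contains the special vertex acquired at step $m_2$ (its vertex set only grows and it stays uniquely largest by Lemma~\ref{lem:tluc}), so by the structural facts every non-special component of $\Gk(n,m)$ has size at most $L_2(\G(n,m))$; using Lemma~\ref{lem:tluc}, the monotonicity of $n^{2/3}\lambda^{-2}\log\lambda$, and the fact that beyond the supercritical window the second largest component is only smaller, this is $O(k^2\omega^2\log(n^{1/3}/k))\le k^2\omega^3\log(n^{1/3}/k)$ for all $m\ge m_2$ and $n$ large.

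Part (b) is the technical heart. By the structural facts, $\sum_{i\ge2}\hat L_i(\Gk(n,m_2))$ is at most the total $\G(n,m_2)$-mass of the $k-1$ special vertices outside the giant, i.e.\ $\sum_{j}|\text{comp}_{\G(n,m_2)}(v_j)|\,\mathbf 1[v_j\notin\text{giant}]$ over the special vertices $v_1,\dots,v_k$. Invoking Remark~\ref{rcouple}, I would fix a ``good'' edge sequence --- on which $\G(n,m)$ satisfies Lemma~\ref{lem:tluc} at all relevant $m$ and $m_2\le(n/2)(1+\lambda_2 n^{-1/3})$, both a.a.s.\ --- and treat the special set as a uniform random $k$-subset; then the expectation of this sum equals $k\,n^{-1}\sum_{i\ge2}L_i(\G(n,m_2))^2$. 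Known estimates for the supercritical random graph (the non-giant part is, by duality, essentially subcritical; cf.\ Lemma~\ref{lem:susceptibility} and~\cite{jansonluczak2008}) bound this ``dust susceptibility'' by $O(n^{1/3}/\lambda(m_2))=O(k\omega)$, so the mean of the sum is $O(k^2\omega)$, while each summand is at most $L_2(\G(n,m_2))=O(k^2\omega^2\log(n^{1/3}/k))$. Lemma~\ref{lem:chernoff} then shows that the sum exceeds $k^2\omega^3\log(n^{1/3}/k)$ with probability $\exp(-\Omega(\omega))=o(1)$. The main obstacle is that $m_2$ is a stopping time depending on the special set, so this estimate must be made uniform --- over the $O(n\omega/k)$ candidate values of $m_2$, equivalently over all $m$ in the window $[m_1,(n/2)(1+\lambda_2 n^{-1/3})]$; closing the ensuing union bound is exactly where the (mild) lower bound on the growth rate of $\omega$, together with $k\le n^{1/3}/\omega^2$, is used. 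An alternative that sidesteps the supercritical dust estimate is to start from a subcritical time, where $n^{-1}\sum_i L_i(\G(n,m))^2$ is controlled directly by Lemma~\ref{lem:susceptibility}, and then push the bound forward to $m_2$ using the rich-get-richer estimate of Lemma~\ref{lem:cxy_process} with the giant playing the role of $Y$.
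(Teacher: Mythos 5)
Your arguments for parts (a) and (c) are essentially the paper's: the giant of $\Gk(n,m_1)$ is special-free, only grows, and acquires a special vertex at step $m_2$ by definition, while every non-special component of $\Gk(n,m)$ is a full component of $\G(n,m)$ and hence bounded by $L_2(\G(n,m))$ via Lemma~\ref{lem:tluc}. Those parts are fine. Part (b) is where the real difficulty lies, and your treatment has genuine gaps.

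First, your reduction is not quite an inequality. If the $\G$-component $D$ that merges with the giant at step $m_2$ contains $k-k'>1$ special vertices, then in $\Gk(n,m_2)$ it is split into $k-k'$ special pieces, only one of which joins the giant; the other $k-k'-1$ pieces contribute to $\sum_{i\ge2}\hat L_i(\Gk(n,m_2))$ but are invisible to your quantity $\sum_j|\mathrm{comp}_{\G(n,m_2)}(v_j)|\,\mathbf 1[v_j\notin\text{giant}]$, since those special vertices \emph{are} in the giant of $\G(n,m_2)$. The paper handles this explicitly, adding a term of order $|D|=\Theta(k^2\omega^2\log(n^{1/3}/(k\omega)))$; the correction fits in the budget, but your bound as stated is false without it. Second, and more seriously, your plan for the stopping-time issue does not close. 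The window of candidate values of $m_2$ has length $\Theta(n^{2/3}\lambda_2)=\Theta(n\omega/k)\ge n^{2/3}$, while the deviation you need (from mean $O(k^2\omega)$ to threshold $k^2\omega^3\log(\cdot)$ with maximal summand $c=\Theta(k^2\omega^2\log(\cdot))$) yields a per-time failure probability of only $\exp(-\Theta(\omega))$. The union bound therefore requires $\omega\gtrsim\log n$, but the lemma (and Theorem~\ref{thm:main}(a), which needs the bound for arbitrarily slowly growing $\omega$) provides no such lower bound on $\omega$. This is precisely why the paper does something different: it proves a symmetry claim that, conditional on $m_2$, $k'$ and the largest component $\Theta^*$, the graph $\G^L(n,m_2)$ is uniform among graphs with the right number of vertices and edges and no component as large as $\Theta^*$, and the $k'$ special vertices in it are uniform. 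This lets one compute the conditional expectation as $k'\chi(\G^L(n,m_2))\sim k'\cdot k\omega$ via Lemma~\ref{lem:susceptibility} and apply Markov's inequality once, with no union over $m$ and no concentration inequality at all. (A third, smaller issue: Lemma~\ref{lem:chernoff} concerns sums $\sum c_iZ_i$ with independent Bernoulli $Z_i$, not the dust mass of a uniform $k$-subset, so even the single-time concentration step would need an additional domination argument.) Your closing alternative via Lemma~\ref{lem:cxy_process} with ``the giant playing the role of $Y$'' also does not apply before $m_2$, since the giant is not a special component there and the $(\ccc,x,y)$-model only tracks merges into special components.
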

\begin{proof}
It follows from Lemma~\ref{lem:prop_m1}(a) that the giant component of $\Gk(n,m_1)$ has size at least $ n / (k \omega)$. It keeps growing from that point on and so the same lower bound holds at time $m_2$. Property~(a) trivially holds. Moreover, using Lemma~\ref{lem:tluc}, not only at time $m_1$ (as indicated by Lemma~\ref{lem:prop_m1}(b)) but also if one continues the random graph process from time $m_1$ on, a.a.s.\ the size of the second largest component of $\G(n,m)$ is always at most $\Theta(k^2 \omega^2 \log (n^{1/3}/k))<k^2 \omega^3 \log (n^{1/3}/k)$. Since any non-special component in $\Gk(n,m)$ is a component in $\G(n,m)$, this proves property~(c). 

Now, in order to show that property (b) holds, we must study the subgraph induced by the vertices not in the largest component. We define:
\[\G^L(n,m) \mbox{ is the graph obtained by deleting the largest component from } \G(n,m).\] 
This is of particular interest when $m$ is in the supercritical range. We define:
\begin{eqnarray*}
n'&=&\mbox{ the number of vertices in } \G^L(n,m_2); \\
m'&=&\mbox{ the number of edges in } \G^L(n,m_2); \\
k' &=& \mbox{ the number of special vertices in } \G^L(n,m_2).
\end{eqnarray*}
By Lemma~\ref{lem:tluc} we have a.a.s.\ 
\begin{equation}\label{en'm'}
n'=n-(4+o(1))(m_2-n/2);\  m'=n'/2-(1+o(1))(m_2-n/2);
\end{equation}
and the component of $\G(n,m_2-1)$ that is added to $\Theta^*$ in step $m_2$ contains exactly $k-k'\geq 1$ special vertices.

Conveniently, the distribution of $\G^L(n,m_2)$ and of its $k'$ special vertices is nearly uniform, despite the conditioning implied by the definition of $m_2$.  Formally, we need the following claim.

\medskip

{\bf Claim:} {\em Expose the values of $m_2, k'$, and the largest component of $\G(n,m_2)$; denote that largest component by $\Theta^*$. (Note that this determines the vertex set of $\G^L(n,m_2)$ and $m'$.) Conditional on that exposure:
\begin{enumerate}
\item[(i)] Every graph on the $n'$ vertices of $\G^L(n,m_2)$ that has $m'$ edges and  no component at least as large as $\Theta^*$ is equally likely to be $\G^L(n,m_2)$.
\item[(ii)] Every set of $k'$ vertices in $\G^L(n,m_2)$ is equally likely to be the special vertices.
\end{enumerate}}

{\bf Proof of the claim:} Consider (i) any set $S$ of $k$ special vertices where exactly $k'$ are outside of $\Theta^*$, and (ii) any random graph process $e_1,\ldots,e_{m_2}$ in which $m_2$ is the first step following $m_1$ where the largest component contains a member of $S$ and $\Theta^*$ is that largest component. Let $L$ be the graph formed by removing $\Theta^*$.

Let $L'$ be any graph on the same vertex set as $L$ with $m'$ edges and with no component larger than $\Theta^*$. Replace the edges of $L$ in the process with the edges of $L'$, in any order; let $G'_i$ be the graph formed by the first $i$ edges of the resulting sequence.  Replace the $k'$ special vertices in $L$ by any set of $k'$ vertices in $V(L)$, and do not change the $k-k'$ special vertices in $\Theta^*$; denote the resulting set of $k$ special vertices as $S'$.  It is straightforward to check that (1) the largest component of $G'_{m_2-1}$ contains no vertex of $S'$, and (2) the largest component of $G'_{m_2}$ is $\Theta^*$ and hence contains a vertex of $S'$.   So $m_2, k'$ and the largest component at step $m_2$ are the same in both processes. Furthermore, each sequence of edges is equally likely to be selected. This implies the claim.

\medskip

By part (ii) of our claim, and reasoning like that in Remark~\ref{rcouple}, we can first expose the graph $\G^L(n,m_2)$ and then choose the $k'$ special vertices. Note that the expected total size of the components containing those vertices is $k'\chi(\G^L(n,m_2))$.
By part (i), we can treat $\G^L(n,m_2)$ as $\G(n',m')$ which, by Lemma~\ref{lem:susceptibility} and~(\ref{en'm'}) a.a.s.\ has susceptability:
\[\chi(\G^L(n,m_2)) \sim \frac {n'/2}{n'/2-m'} \sim k \omega.\]
So the expected total size of the components of $\G^L(n,m_2)$ containing special vertices is $k'\times (1+o(1))k\omega\leq(1+o(1))k^2\omega$. It follows from Markov's inequality that the total size is a.a.s.\ at most $k^2 \omega^2$. 

This bounds the total size of all special components, other than the largest, in $\G(n,m_2)$.  But we actually need to bound the total size in $\Gk(n,m_2)$. If $k'=k-1$, i.e.\ if only one special vertex joins the largest component in step $m_2$, then these two totals are the same.  Otherwise, let $\Phi$ be the component of $\G(n,m_2-1)$ that contains $k-k'$ special vertices and is merged with the largest component in step $m_2$.  In $\Gk(n,m_2-1)$, $\Phi$ is partitioned into exactly $k-k'$ components.  One of them is joined to the largest component in step $m_2$; the others have total size at most $|\Phi|$ which, by Lemma~\ref{lem:tluc}(b) and since $m_2>m_1$, is at most
$\Theta(n^{2/3}\lambda_1^{-2}\log\lambda_1)=\Theta(\omega^2k^2\log(n^{1/3}/k\omega)).$
$\hat{L}_2(\Gk(n,m_2)), \ldots, \hat{L}_k(\Gk(n,m_2))$ consists of those $k-k'-1$ components, along with the $k'$ special components contained in the components of  $\G^L(n,m_2)$ that contain special vertices.  The total size of the latter set was bounded above, and so
\[\sum_{i=2}^k \hat{L}_i(\Gk(n,m_2)) \le k^2 \omega^2 + \Theta(\omega^2k^2\log(n^{1/3}/k\omega)) <\omega^3k^2\log(n^{1/3}/k),\]
thus proving part (b).
\end{proof}

\subsection{Modelling $\Gk(n,m)$ with a $(\ccc,x,y)$-process}\label{sec:connection_to_cxy}

{We continue assuming that $\omega^2 \le k \le n^{1/3}/\omega^2$ for some $\omega=\omega(n) \to \infty$ as $n \to \infty$.}
Beginning at time $m=m_2$, we do not consider $\G(n,m)$  and instead focus directly on $\Gk(n,m)$.  Recall that a component is called {\em special} if it contains a special vertex, and so we always have exactly $k$ special components.

Let $j_1,\ldots,j_{\ell} > m_2$ denote the steps in the $\G_k$-process during which we choose an edge joining a special component to a non-special component.  Of course, we accept that edge.  Let $\Theta_i$ be the non-special component chosen at time $j_i$, and set $c_i=|\Theta_i|$.

\begin{observation} 
After $m=m_2$, the sizes of the special components only change during steps $j_1,\ldots,j_{\ell}$.
\end{observation}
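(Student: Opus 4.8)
The plan is a direct case analysis on the effect of each edge $e_i$ with $i > m_2$ in the $\G_k$-process, so there is essentially nothing to estimate. I would fix such a step, let $u,v$ be the endpoints of $e_i$, and let $A,B$ be the components of $\Gk(n,i-1)$ containing $u$ and $v$ (possibly $A=B$). The four mutually exclusive cases are: (1) $A=B$; (2) $A\neq B$ and both $A,B$ are special; (3) $A\neq B$ and exactly one of $A,B$ is special; (4) $A\neq B$ and neither $A$ nor $B$ is special. I would dispatch them one at a time.

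In case (1) the edge is not a collision, so it is added, but the component structure is unchanged and hence no special component changes size. In case (2), adding $e_i$ would place two special vertices in one component, so by the definition of the $\G_k$-process a collision occurs, $e_i$ is rejected, and nothing changes. In case (3), $e_i$ is accepted and it merges a special component with a non-special one; this is exactly what the definition of the steps $j_1,\ldots,j_\ell$ captures, the size of the special component increases by $c_i=|\Theta_i|$, and every other special component is untouched. In case (4), $e_i$ is accepted and merges two non-special components into a non-special component, so again no special component changes size.

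Collecting the four cases, a special component can change size only in case (3), i.e.\ only at one of the steps $j_1,\ldots,j_\ell$, which is the claim. (The argument also records, as a byproduct, that after $m_2$ two special components are never merged, so there remain exactly $k$ special components throughout, consistent with the running convention.) There is no real obstacle here; the only point requiring a moment's care is to confirm that the list $j_1,\ldots,j_\ell$ was defined to capture precisely the size-changing case and nothing else.
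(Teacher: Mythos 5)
Your case analysis is correct and is precisely the reasoning the paper leaves implicit: the observation is stated without proof because the definition of the $\Gk$-process makes cases (1), (2), and (4) manifestly leave all special component sizes unchanged, while case (3) is exactly what the steps $j_1,\ldots,j_\ell$ were defined to record. Nothing is missing.
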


Now expose the components $\Theta_1,\ldots,\Theta_{\ell}$ but not the edges selected at times $j_1,\ldots,j_{\ell}$.

\begin{observation} 
Conditional on any choice for $\Theta_1,\ldots,\Theta_{\ell}$, at each step $j_i$, the probability that $\Theta_i$ is joined to a particular special component is proportional to the size of that special component.
\end{observation}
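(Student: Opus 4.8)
The plan is to argue directly from the construction of the $\Gk$-process, using the fact that at each relevant step the edge is chosen uniformly from the non-edges whose addition does not create a collision, together with the exchangeability of the special vertices established in Remark~\ref{rcouple}. First I would fix a step $j_i$ and condition on the entire history of the process up to (but not including) this step, as well as on the components $\Theta_1,\dots,\Theta_\ell$ (recall these are exposed, but not the edges chosen at $j_1,\dots,j_\ell$). This conditioning determines the current partition of the vertices into components, which special vertex lies in which special component, and in particular the sizes of the $k$ special components; call them $s_1,\dots,s_k$. It also determines that the edge chosen at step $j_i$ is one joining $\Theta_i$ to some special component (that is what defines $j_i$).

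The key step is then a counting argument. At step $j_i$, among all pairs of vertices not yet considered and not inducing a collision, the subset that joins $\Theta_i$ to a special component consists, for each index $t \in [k]$, of the $|\Theta_i| \cdot s_t$ pairs with one endpoint in $\Theta_i$ and one in the $t$-th special component — minus those pairs that were already ``considered'' (rejected or present) earlier in the process. Here I would invoke the symmetry of Remark~\ref{rcouple}: since the edge sequence $e_1,\dots,e_{\binom n2}$ is chosen independently of the identity of the special vertices, conditioning on the history does not bias which of these $|\Theta_i| s_t$ pairs remain available in a way that depends on $t$; more precisely, conditional on the sizes $s_1,\dots,s_k$ and on the event that the step-$j_i$ edge goes from $\Theta_i$ to a special component, each of the $\sum_t |\Theta_i| s_t$ candidate pairs is equally likely, so the probability of hitting the $t$-th special component is $|\Theta_i| s_t / \sum_{t'} |\Theta_i| s_{t'} = s_t / \sum_{t'} s_{t'}$, proportional to $s_t$ as claimed. (The cleanest way to make the exchangeability rigorous is to generate the process by first fixing $e_1,\dots,e_{\binom n 2}$, running the ``uncoloured'' process to the point where $\Theta_i$ is about to be merged into the union of the special components, and only then revealing which vertices are special — by symmetry the special label is a uniform random subset of size $k$ among the vertices outside $\Theta_i$ that have already been ``used up'' in the appropriate sense, making the target component uniform over vertices of $\bigcup_t (\text{special component } t)$.)

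The main obstacle I anticipate is handling the ``already considered'' pairs cleanly: a priori, the set of non-edges that have been examined and rejected before step $j_i$ could correlate with which special component is large, so one cannot naively say all $|\Theta_i| s_t$ pairs survive. The resolution is precisely the decoupling in Remark~\ref{rcouple} — because the ordered edge sequence is independent of the choice of special set, one should condition on everything \emph{except} the assignment of special labels, observe that the sizes $s_1,\dots,s_k$ and the merging pattern are the only data that matter, and check that the conditional law of the target special component is uniform on the vertices of the special components. Once that symmetry is set up carefully, the proportionality is immediate and the computation is a one-line ratio. I would write this up as: fix $i$; reveal the history and $\Theta_1,\dots,\Theta_\ell$ but not the special labels nor the edges at $j_1,\dots,j_\ell$; apply the Remark~\ref{rcouple} symmetry to conclude that the endpoint of $e_{j_i}$ inside $\bigcup_{t} (\text{special component}_t)$ is uniform over those $\sum_t s_t$ vertices; hence $\Theta_i$ attaches to special component $t$ with probability $s_t / \sum_{t'} s_{t'}$.
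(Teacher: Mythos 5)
Your core counting argument is the right one, and it is essentially all the paper relies on (the Observation is stated there without proof as self-evident): $e_{j_i}$ is uniform over the not-yet-considered pairs, so conditional on it joining $\Theta_i$ to the special part, a special component $C_t$ is hit with probability proportional to the number of still-available pairs in $\Theta_i\times C_t$. The gap is in how you handle the ``already considered'' pairs. You correctly flag this as the main obstacle, but the resolution you propose --- the exchangeability of the special labels from Remark~\ref{rcouple} --- does not address it: the special components $C_1,\dots,C_k$ at time $j_i-1$ have different sizes and different histories, so there is no symmetry among them to exploit, and the assertion that ``conditioning on the history does not bias which of these pairs remain available in a way that depends on $t$'' is precisely what needs proof, not a consequence of the special set being a uniform $k$-subset. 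Remark~\ref{rcouple} says the edge ordering is independent of which vertices are special; it says nothing about which pairs between $\Theta_i$ and a given special component have already appeared in that ordering.

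The correct resolution is deterministic and makes the issue disappear: \emph{no} pair with one endpoint in $\Theta_i$ and the other in a special component (as these components stand at time $j_i-1$) can have been considered before step $j_i$. Indeed, suppose $\{u,v\}=e_s$ for some $s<j_i$, with $u\in\Theta_i$ and $v\in C_t$ at time $j_i-1$. Since components only grow, the component of $u$ at time $s$ is contained in $\Theta_i$ and hence contains no special vertex; an edge with an endpoint in a non-special component never joins two components that each contain a special vertex, so $e_s$ would have been accepted, placing $u$ and $v$ in the same component from step $s$ onward --- contradicting $u\in\Theta_i\neq C_t\ni v$ at time $j_i-1$. Hence all $|\Theta_i|\cdot s_t$ pairs are available for every $t$, and the ratio $s_t/\sum_{t'}s_{t'}$ follows at once. (One further point that a careful write-up should mention, and which neither you nor the paper addresses explicitly: the Observation also conditions on the \emph{future} components $\Theta_{i+1},\dots,\Theta_\ell$. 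This is harmless because only edges between two distinct special components are ever rejected, so the non-special components and the union of the special components evolve identically regardless of which special component each $\Theta_i$ joins; thus $j_1,\dots,j_\ell$ and $\Theta_1,\dots,\Theta_\ell$ carry no information about the merge targets.)
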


So we can model the growth of the largest component with a $(\ccc,x,y)$ process.  Let
\[y=L_1(\Gk(n,m_2))=\hat{L}_1(\Gk(n,m_2));\qquad x=\sum_{i=2}^k \hat{L}_i(\Gk(n,m_2));\]
i.e.\ $y$ is the size of the largest special component and $x$ is the total size of all other special components at step $m_2$. Then setting $\ccc=(c_1,\ldots,c_{\ell})$ we see that our two observations yield:

\begin{observation}
The size of the largest special component at steps $j_1,\ldots,j_{\ell}$ follows {random variable $Y$ in} the $(\ccc,x,y)$ process.
\end{observation}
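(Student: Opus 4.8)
The plan is to exhibit the component that is largest at step $m_2$ and show that, conditionally on the data exposed above, its size evolves exactly as the random variable $Y$ of a $(\ccc,x,y)$-process; the two preceding observations supply almost everything. First I would fix notation: let $\Theta^*$ be the largest component of $\Gk(n,m_2)$. By the definition of $m_2$ it contains a special vertex, hence is one of the $k$ special components, and $|\Theta^*|=L_1(\Gk(n,m_2))=\hat{L}_1(\Gk(n,m_2))=y$, while the other $k-1$ special components have total size $\sum_{i=2}^k\hat{L}_i(\Gk(n,m_2))=x$. I would then condition on $\Gk(n,m_2)$ (so on $\Theta^*$, $x$, $y$) and on the components $\Theta_1,\dots,\Theta_\ell$ together with the steps $j_1,\dots,j_\ell$ at which they are absorbed, but not on the edges selected at those steps; with $\ccc=(c_1,\dots,c_\ell)$ this makes $r=\ell$ and $t_q=x+y+\sum_{i\le q}c_i$.

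Next I would set up the coupling. For $0\le q\le\ell$ let $Y_q$ be the size of $\Theta^*$ and $X_q$ the total size of the other $k-1$ special components immediately after step $j_q$, so $Y_0=y$ and $X_0=x$. By the first observation the special components change only at the steps $j_1,\dots,j_\ell$, and at step $j_q$ exactly one special component absorbs $\Theta_q$ and thereby gains $c_q$ vertices; hence $X_q+Y_q=x+y+\sum_{i\le q}c_i=t_q$, exactly as in the $(\ccc,x,y)$-process. By the second observation, conditionally on everything exposed and on the configuration just before step $j_q$, the probability that $\Theta_q$ joins a given special component is proportional to that component's current size; summing over the $k-1$ components other than $\Theta^*$, the probability that $X$ increases at step $j_q$ is $X_{q-1}/(X_{q-1}+Y_{q-1})$, which is precisely $p_q$ from~(\ref{epqdef}), and in that case $X_q=X_{q-1}+c_q$ and $Y_q=Y_{q-1}$, while otherwise $Y_q=Y_{q-1}+c_q$ and $X_q=X_{q-1}$. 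Since the initial values and the one-step transition law agree with those of the $(\ccc,x,y)$-process, an easy induction on $q$ shows that, conditionally on the exposed data, $(X_q,Y_q)_{q=0}^{\ell}$ has the law of $(X(t_q),Y(t_q))_{q=0}^{\ell}$; in particular the sequence of sizes of $\Theta^*$ at the steps $j_1,\dots,j_\ell$ has the law of $(Y(t_q))_{q=1}^{r}$, which is the assertion. (Since by the first observation $\Theta^*$ does not change after $j_\ell$, its final size equals $Y(t_r)$, and being non-decreasing from $y$ it is the largest special component at $m_2$ and has size at least $y$ throughout; this is the component the statement refers to, and all that will be needed later is the resulting lower bound $L_1(\Pk(n,M))\ge Y(t_r)$.)

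The only genuinely delicate point --- and the one I would phrase most carefully --- is the legitimacy of exposing $\Theta_1,\dots,\Theta_\ell$ without exposing which special component each is joined to, so that the residual randomness is exactly the ``rich-get-richer'' choice. This is precisely the content of the two preceding observations (the choice of special partner at step $j_q$ is a deferred decision, governed by counting the unexposed non-edges between $\Theta_q$ and each special component, of which there are $|\Theta_q|$ times the size of that special component). Granting those observations, nothing remains but the bookkeeping and the induction carried out above.
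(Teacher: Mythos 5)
Your proposal is correct and matches the paper's (essentially unstated) argument: the paper simply asserts that the two preceding observations yield this one, and your write-up is exactly the bookkeeping needed to verify that claim, with the right identification of $Y$ as the size of the component that is largest at step $m_2$, $X$ as the aggregate of the other $k-1$ special components, and the transition probability $X_{q-1}/(X_{q-1}+Y_{q-1})=p_q$ obtained by summing the ``proportional to size'' rule over the non-largest special components. Your deferred-decision justification for exposing $\Theta_1,\dots,\Theta_\ell$ without the edges chosen at $j_1,\dots,j_\ell$ is precisely the content of the second observation, which the paper likewise takes for granted.
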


Note that in this process, we have $t_{\ell}=n$.
By Lemma~\ref{lem:prop_m2}, we have:
\[y\geq n/(k\omega); \qquad x\leq k^2\omega^3\log(n^{1/3}/ k);\qquad \mbox{ and } c_i\leq c:= k^2 \omega^3 \log (n^{1/3}/k)\mbox{ for every } i.\]
From that, it is easily verified that $x \le c \ll y$. Indeed, since $k\le n^{1/3}/\omega^2$, we have $c/y\leq k^3\omega^4\log (n^{1/3}/k)/n \le 2 \log \omega / \omega^{2} = o(1)$. Therefore, we can apply Lemma~\ref{lem:cxy_process} to show that at the end of the process, a.a.s.\ the total size of all but the largest special component is 
\[X(n)= O(xn/y)=O( k^3 \omega^4 \log (n^{1/3}/k) ) = o(n).\] 

\medskip

This proves {Theorem~\ref{thm:main}(a), provided that $\omega^2 \le k \le n^{1/3}/\omega^2$. To extend the result to smaller values of $k$, we apply Observation~\ref{obiggerk}. Fix any $k < k':= \omega^2$. Our bound above yields that a.a.s.\ the largest special component at the end of the $k'$-process has size $1 - O( k'^3 \omega^4 \log (n^{1/3}/k') ) = 1 - O( k \omega^{10} \log (n^{1/3}/k) )$. Observation~\ref{obiggerk} implies that the same bound holds for all $2 \le k < k'$, thus proving Theorem~\ref{thm:main}(a). (Note that in the statement of the theorem, we replaced $\omega^{10}$ by $\omega$ which is allowed as in the statement $\omega$ is any function tending to infinity, regardless how slowly it does so.)}

\section{No component has a chance to become giant:\\ $k \gg n^{1/3} (\log n)^{4/3} (\log \log n)^{1/3}$}\label{sec:k-large}

Suppose now that $k \gg n^{1/3}$. As mentioned earlier, for this range of parameter $k$, collisions will start affecting the process much earlier, namely, when each component has size smaller than the total size of all special components---see Lemma~\ref{lem:prop_m3}. Intuitively, this results in no one component dominating the process, and so no component will be able to grow to linear size---see {Theorem~\ref{thm:main} (b)}. In order to prove that this happens we require a stronger bound on $k$, namely:
\begin{equation}\label{egg} 
k \gg n^{1/3} (\log n)^{4/3} (\log \log n)^{1/3}.
\end{equation}
For technical reasons, we also require the following upper bound
\begin{equation}\label{ell} k\ll n/\log n.
\end{equation}
Thus we have the range of $k$ for Theorem~\ref{thm:main}(b).

\subsection{Early phase}

For this section, we define:
\begin{eqnarray*}
\lambda_3 &=& \left( \frac {k}{n^{1/3}} \log( k / n^{1/3}) \right)^{1/2} \\
m_3&= &(n/2) (1-\lambda_3 n^{-1/3}).
\end{eqnarray*}
Note that, since $n^{1/3} \ll k \ll n/\log n$, we have $\lambda_3\rightarrow\infty$ and $m_3=(n/2)(1-o(1))$.  So we are in the subcritical phase.

\begin{lemma}\label{lem:prop_m3}
Suppose that $k \gg n^{1/3}$ and $k \ll n / \log n$. Let $\lambda_3$ and $m_3$ be defined as above. Then a.a.s.\ the following properties hold
\begin{itemize}
\item [(a)] $\hat{L}_1(\Gk(n,m_3)) \le L_1(\Gk(n,m_3)) = \Theta(n/k) \ll n^{2/3}$;
\item [(b)] $\sum_{i=1}^k \hat{L}_i(\Gk(n,m_3)) \ge  \frac 14 \left( \frac {n k}{\log(k/n^{1/3})} \right)^{1/2} \gg n^{2/3}$.
\end{itemize}
Moreover, for any $m \ge m_3$:
\begin{itemize}
\item [(c)] the size of any non-special component in $\Gk(n,m_3)$ is at most $n \log n / k$.
\end{itemize}
\end{lemma}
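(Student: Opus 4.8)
The plan is to analyse $\Gk(n,m)$ through its coupling with the random graph process $\G(n,m)$ on the same edge sequence, choosing the $k$ special vertices last (Remark~\ref{rcouple}). First note the arithmetic: $m_3=(n/2)(1-\lambda_3 n^{-1/3})$ with $0\ll\lambda_3\ll n^{1/3}$ (this is exactly where the hypotheses $n^{1/3}\ll k$ and $k\ll n/\log n$ are used), so Lemma~\ref{lem:tluc}(a) gives a.a.s.\ $L_1(\G(n,m_3))=\Theta(n^{2/3}\lambda_3^{-2}\log\lambda_3)$; since $k\gg n^{1/3}$ one has $\log\lambda_3=\Theta(\log(k/n^{1/3}))$, and a short computation turns this into $L_1(\G(n,m_3))=\Theta(n/k)\ll n^{2/3}$. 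As $\Gk(n,m_3)\subseteq\G(n,m_3)$, every component of $\Gk(n,m_3)$ lies inside a component of $\G(n,m_3)$, which gives the upper bound $L_1(\Gk(n,m_3))=O(n/k)$ in (a). I would then record two structural facts. Since ``special'' is monotone and two distinct special components never merge (that is the definition of a collision), every special component of $\Gk(n,m)$ contains exactly one special vertex, and every rejected edge lies inside a component of $\G(n,m)$ that contains at least two special vertices. More importantly: \emph{for every $m$, the vertices lying in special components of $\Gk(n,m)$ are exactly the vertices lying in special components of $\G(n,m)$}; in particular $\sum_{i=1}^k\hat{L}_i(\Gk(n,m))=U(m)$, where $U(m)$ is the total size of the components of $\G(n,m)$ that meet the special set, and the non-special components of $\Gk(n,m)$ are precisely those of $\G(n,m)$. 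The inclusion ``$\subseteq$'' is immediate from $\Gk\subseteq\G$; for ``$\supseteq$'', given a vertex $v$ joined in $\G(n,m)$ to a special vertex by a path $P$, let $xy$ be the first rejected edge along $P$ read from $v$ (if there is none, $v$ already reaches a special vertex in $\Gk(n,m)$); the part of $P$ before $xy$ uses only accepted edges, so $v$ is connected to $x$ in $\Gk(n,m)$, and at the step $xy$ was rejected the component of $x$ was special, hence remains special in $\Gk(n,m)$.

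For (b), Lemma~\ref{lem:susceptibility} and $\chi(G)=n^{-1}\sum_C|C|^2$ give a.a.s.\ $\sum_C|C|^2\sim n\cdot\frac{n/2}{n/2-m_3}=n^{4/3}/\lambda_3$. Condition on the a.a.s.\ event that $\G(n,m_3)$ has this susceptibility and $L_1(\G(n,m_3))=O(n/k)$, then expose the special vertices. Writing $Y_C$ for the indicator that component $C$ is special, $U(m_3)=\sum_C|C|Y_C$ and $\E[U(m_3)\mid\G(n,m_3)]=\sum_C|C|\bigl(1-(1-k/n)^{|C|}\bigr)$; since $|C|\le L_1=O(n/k)$ the bracket is $\Theta(|C|k/n)$, so this expectation is $\Theta\bigl((k/n)\sum_C|C|^2\bigr)=\Theta(kn^{1/3}/\lambda_3)=\Theta\bigl(\sqrt{nk/\log(k/n^{1/3})}\bigr)$. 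The $Y_C$ are negatively associated (increasing functions of the special-vertex indicator vector with pairwise disjoint supports), so $\mathrm{Var}(U(m_3)\mid\G(n,m_3))\le\sum_C|C|^2\Pr[Y_C]\le(k/n)\sum_C|C|^3\le(k/n)L_1\sum_C|C|^2=O(n^{4/3}/\lambda_3)$, which is a factor $O\bigl(\sqrt{n\log(k/n^{1/3})/k^3}\bigr)=o(1)$ smaller than the square of the mean --- this is the one place the hypothesis $k\gg n^{1/3}(\log n)^{4/3}(\log\log n)^{1/3}$ (really just $k^3\gg n\log n$) enters. Chebyshev then gives $\sum_{i=1}^k\hat{L}_i(\Gk(n,m_3))=U(m_3)\sim\E[U(m_3)\mid\G(n,m_3)]$, which exceeds $\tfrac14\sqrt{nk/\log(k/n^{1/3})}$ once the constants are tracked, and this is $\gg n^{2/3}$ because $k\gg n^{1/3}\log n$.

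For (c) --- which I read as a statement about $\Gk(n,m)$ for every $m\ge m_3$ --- the key lemma above makes every non-special component of $\Gk(n,m)$ a component of $\G(n,m)$, so it suffices to show that a.a.s.\ every component of $\G(n,m)$ of size $>n\log n/k$, for every $m$, contains a special vertex. Fix the edge sequence and call a step \emph{heavy} if at it a component of size $>n\log n/k$ is created by merging two components each of size $\le n\log n/k$; such a component has size $\le 2n\log n/k$, and the vertex sets created at distinct heavy steps are pairwise disjoint (a vertex never leaves the class of size-$>n\log n/k$ components), so there are fewer than $k/\log n$ heavy steps. Each corresponding component misses all $k$ special vertices with probability at most $(1-\log n/k)^k\le 1/n$, so a union bound over the $<k/\log n$ of them is $o(1)$ since $k\ll n/\log n$; as every component of $\G(n,m)$ of size $>n\log n/k$ contains a component created at a heavy step, we get (c). The remaining lower bound in (a) follows because a.a.s.\ $\G(n,m_3)$ has $\omega'(n)\to\infty$ components of size $\Theta(n/k)$ --- a standard feature of the subcritical phase, implicit in the results of~\cite{luczak1990} behind Lemma~\ref{lem:tluc} and reflected in $\sum_C|C|^2\gg L_1^2$ --- each of which contains at most one special vertex with probability bounded away from $0$ (as $|C|k/n=\Theta(1)$); negative association over these disjoint components forces a.a.s.\ at least one of them to contain at most one special vertex, and by the first structural fact such a component contains no rejected edge, so it survives intact in $\Gk(n,m_3)$, giving $L_1(\Gk(n,m_3))=\Omega(n/k)$.

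The step I expect to be the main obstacle is the structural identity $\sum_{i=1}^k\hat{L}_i(\Gk(n,m))=U(m)$ (equivalently, that $\Gk$ and $\G$ have the same non-special vertex set): everything after it is a routine first/second-moment calculation, but getting the first-rejected-edge-along-a-path argument --- together with the bookkeeping that each special component carries exactly one special vertex --- clean and correct is the delicate point. A secondary nuisance is that the $\Omega(n/k)$ half of (a) leans on a property of the subcritical regime slightly beyond what Lemma~\ref{lem:tluc} literally records, and that extracting the explicit constant $\tfrac14$ in (b) requires carrying the $\Theta(1)$ factors through the above estimates.
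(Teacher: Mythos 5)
Your overall architecture is sound, and parts (a) (upper bound) and (c) coincide with the paper's proof: (a) is read off from Lemma~\ref{lem:tluc}(a) via the coupling $\Gk\subseteq\G$, and (c) is exactly the paper's ``dangerous/deadly rounds'' argument (your ``heavy steps''), with the same count $k/\log n$ of such steps and the same $1/n$ failure probability per step. Two things you do that the paper does not: you actually prove the structural identity that the special-component vertex sets of $\Gk(n,m)$ and $\G(n,m)$ coincide (the paper asserts ``this random variable is exactly the same in both models'' without the first-rejected-edge-on-a-path argument, which is correct as you give it, using that each special component of $\Gk$ carries exactly one special vertex and that specialness of a component is preserved over time); and you supply a lower bound for $L_1(\Gk(n,m_3))$, which the paper silently omits (its proof of (a) only gives the upper bound, which is all that is used downstream). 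For part (b) you genuinely diverge: the paper runs a breadth-first exploration from the special set in $\Gp(n,p)$ with $p$ slightly below $m_3/\binom n2$ and applies the maximal Chernoff bound (Lemma~\ref{lem:chernoff}) to show the exploration cannot die before saturating $\frac14 k\chi$ vertices --- the authors explicitly chose this route because they believed a martingale/second-moment approach fails for $k$ near $n^{1/3}$. Your conditional Chebyshev argument with negative association of the indicators $Y_C$ shows this belief is too pessimistic: your variance-to-squared-mean ratio is $O((n\log(k/n^{1/3})/k^3)^{1/2})$, which is $o(1)$ already for $k\gg n^{1/3}$ (writing $k=n^{1/3}\omega$ it equals $O((\log\omega/\omega^3)^{1/2})$), so no hypothesis beyond the lemma's own is needed --- your parenthetical claims that you need $k^3\gg n\log n$, and that $\gg n^{2/3}$ in (b) needs $k\gg n^{1/3}\log n$, are both overstatements; $k\gg n^{1/3}$ suffices for each.

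The one substantive soft spot is the constant $\tfrac14$ in (b). Chebyshev gives $U\sim\E[U\mid\G(n,m_3)]$, but your lower bound on that conditional mean is $\frac kn\sum_C|C|^2/(1+|C|k/n)$, and since $L_1k/n=\Theta(1)$ with an unspecified constant, the components of near-maximal size could a priori carry a constant fraction of $\sum_C|C|^2$ while contributing with a degraded constant. To recover $\tfrac14$ you need either the sharp constant in \L{}uczak's asymptotics for $L_1$ (so that $1-\frac{kL_1}{2n}$ can be evaluated via inclusion--exclusion), or the standard refinement that in the subcritical phase the susceptibility is dominated by components of size $O(n^{2/3}\lambda_3^{-2})=O(n/(k\log(k/n^{1/3})))$, for which $|C|k/n=o(1)$; either works, but neither is in the lemmas you cite, so ``once the constants are tracked'' is doing real work. (Since the late-phase argument only uses $y=\Theta((nk/\log(k/n^{1/3}))^{1/2})$, this does not endanger the application, but it does mean your write-up does not yet deliver the stated $\tfrac14$.) The same caveat, which you flag yourself, applies to the $\Omega(n/k)$ half of (a): the existence of $\omega'\to\infty$ components of size $\Theta(n/k)$ is true but lies beyond the literal statement of Lemma~\ref{lem:tluc}.
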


Since the proof of this lemma is long, we split it into a few parts. 

\begin{proof}[Proof of Lemma~\ref{lem:prop_m3}(a)]
From Lemma~\ref{lem:tluc} we get that a.a.s.\ the size of the largest component in $\G(n,m_3)$ is equal to
$$
x = \Theta(n^{2/3} \lambda_3^{-2} \log \lambda_3) = \Theta(n/k).
$$
The size of the largest component in $\Gk(n,m_3)$ is at most the size of the largest component in $\G(n,m_3)$, so property~(a) holds a.a.s.
\end{proof}

\begin{proof}[Proof of Lemma~\ref{lem:prop_m3}(b)]
Note that Lemma~\ref{lem:susceptibility} implies that a.a.s.\ the susceptibility of $\G(n,m_3)$ is 
$$
\chi = \frac {n/2}{n/2 - (n/2 - \lambda_3 n^{2/3}/2)} = \frac {n^{1/3}}{\lambda_3} = \left( \frac {n}{k \log(k/n^{1/3})} \right)^{1/2}.
$$
Hence, the expected total size of all special components is
$$
k \chi = \left( \frac {nk}{\log(k/n^{1/3})} \right)^{1/2}.
$$
If we were only concerned with $k \gg \sqrt{n \log n}$, the concentration of this total size would follow easily from the Hoeffding-Azuma inequality for martingales. But we need a more sophisticated argument for $k$ near $n^{1/3}$. 


It will be simpler to work with the binomial random graph $\Gp(n,p)$ and then translate the results back to $\G(n,m_3)$. Let $\gamma = \gamma(n)= \sqrt{n} \log n \gg \sqrt{n}$, and let
$$
p = \frac {m_3 - \gamma}{{n \choose 2}} = \frac {1-\lambda_3 n^{-1/3}-2\gamma/n}{n-1} = \frac {1 - (1+o(1)) \left( k \log (k/n^{1/3}) / n \right)^{1/2}}{n-1}.
$$

We start with $n$ isolated vertices, $k$ of them are special and form set $K$. We will find a lower bound for the sum of the sizes of all components in $\Gp(n,p)$ containing special vertices. Lemma~\ref{lem:gnp_to_gnm} will then imply that the same bound holds in $\G(n,m_3)$ and so also in $\Gk(n,m_3)$ (as this random variable is exactly the same in both models).

Consider the \emph{breadth-first-search} process starting from $K$. Put all vertices of $K$ into a \emph{queue} $Q$ (first-in first-out list); in any order. Call all vertices of $K$ \emph{saturated}, and then do the following as long as $Q$ is not empty: remove $w$ from $Q$, expose all edges from $w$ to non-saturated vertices, put all new neighbours of $w$ into $Q$ and call them saturated. Note that all saturated vertices lie in special components.

Let $t$ be the random step at which this process halts; i.e.\ reaches $Q=\emptyset$.  For all $i\leq t$ we let $S_i$ denote the number of saturated vertices at step $i$ of the process. 
Set 
\[
s= \frac 14 \left( \frac {n k}{\log(k/n^{1/3})} \right)^{1/2}=\frac 14 k\chi.
\]
It suffices to prove that a.a.s.\ we will reach a step $i$ for which $S_i\geq s$.
Note that at any step $i\leq t$, $|S_i|\geq i$. So it suffices to prove that a.a.s.\ we do not have: 
\begin{equation}\label{em3b}
t< s \qquad \mbox{ and } \qquad S_i\leq s \qquad \forall i\leq t.
\end{equation}
Let $Z_i$ denote the random variable counting the number of vertices added into $Q$ at the $i$th step of the process. Since we remove one vertex from $Q$ at each step, the size of $Q$ at the end of step $i$ is
$$
k-i+ \sum_{j=1}^i Z_j.
$$
Note that $Z_i$ has binomial distribution $\bin(n-S_i,p)$. Indeed, if we let $X_1,\ldots$ be a sequence of independent Bernoulli($p$) variables, then we can couple so that for all $i\leq t$ we have 
\[\sum_{j=1}^i Z_j=\sum_{j'=1}^{\sum_{\ell=1}^in-S_{\ell}}X_{j'}.\]  
So the probability that~(\ref{em3b}) holds is at most the probability that
\begin{equation}\label{e9}
\exists i<s \mbox{ such that } \sum_{j'=1}^{i(n-s) } X_{j'} \leq i-k.
\end{equation}
However, since 
\begin{eqnarray*}
\E\left[\sum_{j'=1}^{i(n-s)} X_{j'} \right] &=& i(n-s)p\\
&=& i \left( 1 - \frac 14 \left( \frac {k}{n \log(k/n^{1/3})} \right)^{1/2} \right) \left( 1 - (1+o(1)) \left( \frac {k\log(k/n^{1/3})}{n} \right)^{1/2} \right) \\
&=& i \left( 1 - (1+o(1)) \left( \frac {k\log(k/n^{1/3})}{n} \right)^{1/2} \right),
\end{eqnarray*}
(\ref{e9}) would imply that 
\begin{align*}
\E\left[\sum_{j'=1}^{i(n-s)} X_{j'} \right] &-\sum_{j'=1}^{i(n-s)} X_{j'}
>k - { (1+o(1)) i \left( \frac {k\log(k/n^{1/3})}{n} \right)^{1/2} } \\
&>k- (1+o(1)) s \left( \frac {k\log(k/n^{1/3})}{n} \right)^{1/2} = \left( \frac 34 - o(1) \right) k  ~~>~~ \frac12 k.
\end{align*}
We note that {$\E\left[\sum_{j'=1}^{s(n-s)} X_{j'} \right]< s <\sqrt{nk}$ } and apply Lemma~\ref{lem:chernoff} with $c=1$ to obtain that the probability of~(\ref{e9}) is at most
\[\exp\left(- \frac {(k/2)^2}{2\sqrt{nk}}\right) =\exp\left(- \frac {(k^3/n)^{1/2}}{8}\right) =o(1),\]
since $k \gg n^{1/3}$. This proves part (b).
\end{proof}

\begin{proof}[Proof of Lemma~\ref{lem:prop_m3}(c)]
Set $c = n \log n / k$. Note first that any non-special component in $\Gk(n,m)$ is a component in $\G(n,m)$. We will run the $\G(n,m)$ process and say that round $m$ is \emph{dangerous} if an edge added during this round connects two components of corresponding sizes $c_1$ and $c_2$, such that $c_1 \le c$, $c_2 \le c$, but $c_1 + c_2 > c$. We say that a dangerous round is \emph{deadly} if the component formed contains no special vertex. We need to show that a.a.s.\ there are no deadly rounds. Clearly, the number of dangerous rounds is at most $n/c$. To bound the probability that the $i$th dangerous round is deadly, we run the $\G(n,m)$ process until the $i$th dangerous round; note that the process up to this point is independent of the choice of special vertices, so we can choose them after the $i$th dangerous round. The probability that none of the $k$ vertices are in the component of size at least $c$ formed in this round is at most
$$
 \frac {{n-c \choose k}} {{n \choose k}} =  \frac {(n-c)_k}{(n)_k} \le \left( 1 - \frac {c}{n} \right)^k \le  \exp \left( - \frac {ck}{n} \right) = \frac {1}{n}.
$$
So the expected number of deadly rounds is at most $\frac{n}{c} \cdot \frac {1}{n} = o(1)$ and so a.a.s.\ no dangerous round is deadly which finishes the proof of this property.
\end{proof}

\subsection{Late phase}
 
We still assume that $k \ll n / \log n$ so that Lemma~\ref{lem:prop_m3} can be applied. We continue the $\Gk(n,m)$ process from time $m_3$ on. We model it with a $(\ccc,x,y)$-process as in Subsection~\ref{sec:connection_to_cxy}. Again, we define $c_1,\ldots,c_{\ell}$ to be the sizes of the non-special components that are joined to special components after step $m_3$.  By Lemma~\ref{lem:prop_m3}(c), every $c_i\leq c:=n \log n / k$.

Let $v$ be any of the $k$ special vertices.  We will let $X$ count the size of the component containing $v$, and we let $Y$ count the total size of the other $k-1$ special components.  By Lemma~\ref{lem:prop_m3}(b,c), initially (i.e., at step $m_3$) we have
\[
X\leq \Theta(n/k) \le x:= 30 n (\log n)^2 / k; \qquad Y\geq y:=\Theta( (nk/\log(k/n^{1/3}))^{1/2} ).
\] 
{(Note that we used a loose upper bound for $X$ to make some room for an argument below that gives the desired upper bound for the failure probability.)} Using the fact that $k\gg n^{1/3} (\log n)^{4/3} (\log \log n)^{1/3}$ we get $x,c\ll y$.  So Lemma~\ref{lem:cxy_process} implies that at the end of the process, with probability at least $1-\exp (- x/(20c) ) = 1 - o(n^{-1})$, we have
\begin{eqnarray*}
X &=& O(xt_{\ell}/y)=O(xn/y) = O \left( \frac {n (\log n)^2 / k}{(nk/\log(k/n^{1/3}))^{1/2}} n \right) \\
&=& O \left( \left( \frac {n (\log n)^4 \log(k/n^{1/3})}{k^3} \right)^{1/2} n \right)=o(n).
\end{eqnarray*}
Multiplying by the $k$ choices for $v$, with probability at least $1-o(1)$, at the end of the process every special component has size $o(n)$.  This completes the proof of Theorem~\ref{thm:main}(b).

\subsection{Extending the argument for large values of $k$}\label{sec:part_c}

Until now, we have assumed that $k\ll n/\log n$.  To extend to higher values of $k$, we apply Observation~\ref{obiggerk}.  For any $\omega\rightarrow\infty$ with $n$, set $k'=n/(\omega\log n)$.  Our bound above yields that a.a.s. the largest special component at the end of the process has size $O(\omega^{3/2} \log^{4} n)$.  Observation~\ref{obiggerk} says that the same bound holds for all $k\geq k'$, thus proving Theorem~\ref{thm:main}(c).

\section{Concluding Remarks}\label{sec:remarks}

Note that in Theorem~\ref{thm:main}(b) we needed to assume that $k \gg n^{1/3} (\log n)^{4/3} (\log \log n)^{1/3}$. This seems to be an artifact of the proof technique we use (the union bound over all special components) rather than the lower bound that is needed. It is natural to conjecture that a.a.s.\ $L_1(\Pk(n,M))=o(n)$ even for $k \gg n^{1/3}$. Indeed, if $k=n^{1/3} \omega$ for any $\omega = \omega(n) \to \infty$, one can show (for example, using the argument as in the proof of Lemma~\ref{lem:prop_m3}(b)) that in $\Gk(n,n/2)$, a.a.s.\ the total size of all special components is of order $n^{2/3} \sqrt{\omega}$. From the observations in Section~\ref{sec:known-results} we know that a.a.s.\ the giant component has size at most $n^{2/3} \omega^{1/4}$ (in fact, of order $n^{2/3}$; as usual, we make some room for the argument to work), and the largest component that appears after time $n/2$ is of order $n^{2/3}$. By Lemma~\ref{lem:cxy_process}, we get that a.a.s.\ the largest special component at time $m=n/2$ grows only to size $o(n)$. This supports the conjecture but it is not clear how to avoid using the union bound and so it remains an open problem.


\bigskip

Finally, we would like to thank Megan Dewar and John Proos from the Tutte Institute for Mathematics and Computing for stimulating discussions on the problem and its applications.

\bibliographystyle{apalike}
\bibliography{k-process}

\end{document}